\documentclass[pdflatex, sn-mathphys-num]{sn-jnl}

\usepackage{graphicx}%
\usepackage{multirow}%
\usepackage{amsmath,amssymb,amsfonts}%
\usepackage{amsthm}%
\usepackage{mathrsfs}%
\usepackage[title]{appendix}%
\usepackage{xcolor}%
\usepackage{textcomp}%
\usepackage{manyfoot}%
\usepackage{booktabs}%
\usepackage{algorithm}%
\usepackage{algorithmicx}%
\usepackage{algpseudocode}%
\usepackage{listings}%
\usepackage{bigints}
\newcommand{\norm}[1]{\left\lVert #1\right\rVert}

\theoremstyle{thmstyleone}%
\newtheorem{theorem}{Theorem}[section]
\newtheorem{lemma}[theorem]{Lemma}

\newtheorem{proposition}[theorem]{Proposition}

\theoremstyle{thmstyletwo}%

\theoremstyle{thmstylethree}%

\begin{document}

\title[Approximations of eigenvalue problem]{Projection-based approximations for eigenvalue problems of Fredholm integral operators with Green's kernels}

\author[1]{\fnm{Shashank K.} \sur{Shukla}}\email{shashankks@rgipt.ac.in}

\author*[1]{\fnm{Gobinda} \sur{Rakshit}}\email{g.rakshit@rgipt.ac.in}

\author[2]{\fnm{Akshay S.} \sur{Rane}}\email{as.rane@ictmumbai.edu.in}

\affil[1]{\orgdiv{Department of Mathematical Sciences}, \orgname{Rajiv Gandhi Institute of Petroleum Technology}, \orgaddress{\street{Jais}, \city{Amethi} \postcode{229304}, \state{Uttar Pradesh}, \country{India}}}

\affil[2]{\orgdiv{Department of Mathematics}, \orgname{Institute of Chemical Technology}, \orgaddress{\street{Nathalal Parekh Marg, Matunga}, \city{Mumbai} \postcode{400019}, \state{Maharashtra}, \country{India}}}


\abstract{We consider the eigenvalue problem $\mathcal{K} x = \lambda x$. Our analysis focuses on the convergence rates of eigenvalue and spectral subspace approximations for compact linear integral operator $\mathcal{K}$ with Green's kernels. By employing orthogonal and interpolatory projections at \( 2r+1 \) collocation points (which are not necessarily Gauss points) onto an approximating space of piecewise even degree polynomials, we establish the superconvergence of eigenfunctions under iteration. The modified projection methods achieve a faster convergence rates compared to classical projection methods. The enhancement in convergence rate is verified by numerical examples.}

\keywords{Eigenvalue problem, Fredholm integral operator, Green's kernel, Orthogonal and Interpolatory projections, Collocation points, Spectral subspace}


\pacs[MSC Classification]{45C05; 47A75; 65R15; 65R20}

\maketitle

\setcounter{equation}{0}
\section{Introduction}\label{intro}

Let $\mathcal{X} = L^2[0,1] ~\text{or}~ C[0, 1]$, and $\Omega = [0, 1] \times [0, 1]$.  
Let $\kappa$ be a real-valued continuous function on $\Omega$.  
We aim to solve the eigenvalue problem of finding an eigenvector $\varphi \in \mathcal{X}$ and an eigenvalue $\lambda \in \mathbb{C} \setminus \{0\}$ such that  
\begin{equation} \label{Eq: 001}
	\mathcal{K} \varphi = \lambda \varphi , 
\end{equation}
where $\mathcal{K}: \mathcal{X} \mapsto \mathcal{X}$ is a compact linear integral operator, defined by  
\begin{equation} \label{Eq: 002}
	(\mathcal{K}x)(s) = \int_{0}^{1} \kappa(s, t)x(t) \, dt, \quad s \in [0, 1], \quad x \in \mathcal{X},
\end{equation}
with $\kappa$ being a specified kernel function of Green's type.  

Let $\mathcal{K}_n$ be a sequence of continuous finite-rank operators converging pointwise to $\mathcal{K}$, and suppose the family $\{\mathcal{K}_n\}$ is collectively compact, meaning the set  
\[
\{\mathcal{K}_n x : n \geq 1, \ \|x\|_\infty \leq 1\}
\]
has compact closure in $\mathcal{X}$. The eigenvalue problem \eqref{Eq: 001} is approximated by  
\begin{equation} \label{Eq: 003}
	\mathcal{K}_n \varphi_n = \lambda_n \varphi_n, \quad \lambda_n \in \mathbb{C}, \quad 0 \neq \varphi_n \in \mathcal{X}.
\end{equation}

Since $\mathcal{K}_n$ is of finite rank, the eigenvalue problem \eqref{Eq: 003} can be reduced to a matrix eigenvalue problem (see Atkinson \cite{KEA0} and Ahues et al. \cite{Ahues}). The approximate solutions $(\lambda_n, \varphi_n)$ converge to the exact solution $(\lambda, \varphi)$ under standard spectral approximation theory for compact operators.  

When establishing the convergence of an approximate solution, a key consideration is often the rate at which the approximate solution converges to the exact solution.
Several important methods have been developed for computing approximate solutions, including Nystr\"om methods, where the integral in \eqref{Eq: 002} is replaced by a convergent quadrature formula, and projection-based approaches such as Galerkin and collocation methods, along with their variants. The analysis of approximate solutions for eigenvalue problems with smooth kernels has been widely studied in the literature \cite{Ahues, KEA0, Baker0, Chatelin, Sloan0}. For kernels of the Green's function-type, which often lack differentiability along the diagonal, early investigations were conducted in Chatelin--Lebbar \cite{Cha-Leb2}, where interpolatory projections used Gauss points (zeros of Legendre polynomials) as collocation nodes.  

The improvement of spectral subspace approximations in projection methods through iterative techniques was first introduced by Sloan \cite{Sloan1}. In Kulkarni \cite{RPK5, RPK6}, a modified projection method was proposed for solving \eqref{Eq: 001} using sequences of projections. It was shown that for sufficiently smooth kernels, the modified projection method yields faster convergence of eigenelements compared to the iterated projection method. Furthermore, convergence of eigenvectors can be significantly enhanced using the iterated modified projection method, obtained from a one-step iteration of the modified projection approach. Other notable approaches include the degenerate kernel method in \cite{Gnan1}, which employs kernel interpolation in both variables and provides error bounds for eigenvalues and spectral subspaces, while also demonstrating improved convergence through eigenfunction iteration. Nystr\"om and degenerate kernel methods using projections at Gauss points onto spaces of piecewise polynomials are known to achieve high-order convergence. Moreover, in~\cite{CA-PS-DS-MT}, superconvergent variants of these methods were introduced, further enhancing the convergence of eigenvalues and spectral subspaces for sufficiently smooth kernels, with additional gains in eigenvector convergence through iteration. In all these cases, the use of piecewise polynomial basis functions reduces the problem to finite-dimensional matrix eigenvalue computations.

Consider a uniform partition of $[0,1]$ given by $\{0 = t_0 < t_1 < \cdots < t_n = 1\}$, where $t_j = \dfrac{j}{n}$ for $j = 0, 1, 2, \ldots, n$, and let $h = \dfrac{1}{n}$. For $r \geq 0$, let $\mathcal{X}_n$ denote the space of piecewise polynomials of even degree $\leq 2r$ associated with this partition.  

In this paper, we establish convergence rates of eigenvalue and spectral subspace approximations for the eigenvalue problem \(\mathcal{K} \varphi = \lambda \varphi\) using both orthogonal projections and interpolatory projections at \(2r+1\) collocation points. In contrast to earlier works such as~\cite{RPK5,RPK6}, which mainly consider smooth kernels together with piecewise discontinuous approximating spaces, we focus on integral operators with Green's function-type kernels. Moreover, instead of employing Gauss points, we adopt a simpler setup with \(2r+1\) equidistant points in each subinterval \([t_{j-1}, t_j]\), \(j=1,2,\ldots,n\). Despite this choice, our analysis show that iterations of both classical collocation/Galerkin and modified projection approximations of the eigenfunctions still yield superconvergence. Furthermore, the modified projection method achieves faster convergence than classical projection methods for eigenvalue approximations. These findings highlight the flexibility and effectiveness of our approach, demonstrating that high-order convergence can be attained without relying on special-function-based collocation nodes.

The structure of the paper is as follows.  Section \(2\) introduces the definition of Green's function-type kernels and spectral projections, along with key results and fixed notations used throughout the paper. Section \(3\) defines both orthogonal and interpolatory projections and provides relevant theoretical insights. In the final subsection of this section, we present projection methods and their variants tailored for eigenvalue problems. Section \(4\) details our main results on the orders of convergence for the Galerkin and collocation methods, including their modified versions. Numerical illustration is provided in Section \(5\). The paper concludes with Section \(6\), summarizing the findings and implications of our study.


\section{Preliminaries}

Let $\mathcal{X} = L^2[0,1] ~\text{or}~ C[0, 1]$. Consider the subsets  of $\Omega$ as
\[
\Omega_1 = \{(s,t) \mid 0 \leq t \leq s \leq 1\},  
\quad  
\Omega_2 = \{(s,t) \mid 0 \leq s \leq t \leq 1\}.
\]  
Suppose there exist functions \(\kappa_i \in C^{\alpha}(\Omega_i)\), \(i=1,2\), such that the kernel \(\kappa\) can be written as  
\[
\kappa(s,t) =  
\begin{cases}  
	\kappa_1(s,t), & (s,t) \in \Omega_1, \\[1mm]
	\kappa_2(s,t), & (s,t) \in \Omega_2,  
\end{cases}  
\]  
and \(
\kappa_1(s,s) = \kappa_2(s,s), ~ \text{for all } s \in [0,1].
\)
For \( r \geq 0 \), we assume that the smoothness parameter \(\alpha\) satisfies \(\alpha > r\). A kernel \(\kappa\) satisfying these properties is referred to as a Green's kernel. Such kernels are generally non-smooth, as their regularity is restricted to the separate domains \(\Omega_1\) and \(\Omega_2\), with only continuity across the diagonal $s=t$.

Now, we will consider some basic definitions and stadard results on spectral theory:\\
Let $\mathcal{X} = L^2[0,1] ~\text{or}~ C[0, 1]$, and $\mathcal{BL}(\mathcal{X})$ will denote the set of all bounded linear operators on $\mathcal{X}$, equipped with the operator norm. Let  $\mathcal{K} : \mathcal{X} \mapsto \mathcal{X}$ be a compact linear operator, and let the resolvent set of $\mathcal{K}$ is defined as 
\[
\rho(\mathcal{K}) = \{ z \in \mathbb{C} : (\mathcal{K} - zI)^{-1} \in \mathcal{BL}(\mathcal{X}) \}
\]
and the spectrum of $\mathcal{K}$ is $\sigma(\mathcal{K}) = \mathbb{C} \setminus \rho(\mathcal{K})$. The point spectrum of $\mathcal{K}$ contains all $\lambda \in \sigma(\mathcal{K})$ for which $\mathcal{K} - \lambda I$ is not injective. In such cases, $\lambda$ is referred as an eigenvalue of $\mathcal{K}$. Let $\Gamma$ be a rectifiable, simple, closed, positively oriented curve contained in $\rho(\mathcal{K})$ that separates $\lambda$ from the remainder of the spectrum of $\mathcal{K}$. The spectral projection corresponding to  $\mathcal{K}$ and $\lambda$ is then given by:
\[
E = -\frac{1}{2\pi i} \int_{\Gamma} (\mathcal{K} - zI)^{-1} \, dz \in  \mathcal{BL}(\mathcal{X}),
\]
and the dimension of $\mathcal{R}(E)$, the range of the operator $E$, is referred as the algebraic multiplicity of $\lambda$. If $\lambda$ has finite algebraic multiplicity, it is classified as a spectral value of finite type. In this case, $\lambda$ is an eigenvalue of $\mathcal{K}$. ( See Ahues et al \cite[Proposition 1.31]{Ahues}.) Furthermore, if the algebraic multiplicity of $\lambda$ is equal to one, $\lambda$ is called a simple eigenvalue of $\mathcal{K}$. For any nonzero subspaces $Y$ and $Z$ of $\mathcal{X}$, define 
\[
\delta(Y, Z) = \sup \{d(y, Z) : y \in Y, \norm{y}_\infty = 1\},
\]
where $d(y, Z) = \inf_{z \in Z} \norm{y - z}$. The gap between $Y$ and $Z$ is then given by  
\[
\hat{\delta}(Y, Z) = \max\{\delta(Y, Z), \delta(Z, Y)\}.
\]	
Consider a sequence of operators $\{\mathcal{K}_n\} \subset \mathcal{BL}(\mathcal{X})$ that converges to $\mathcal{K}$ in a collectively compact manner. The following results are taken from Osborn \cite{Osborn}:  

For sufficiently large $n$,  $\Gamma \subset \rho(\mathcal{K}_n)$ and
$\max \{ \norm{(\mathcal{K}_n - zI)^{-1}} : z \in \Gamma \} < \infty$,
the spectral projection corresponding to $\mathcal{K}_n$, given by  
\[
E_n = -\frac{1}{2\pi i} \int_{\Gamma} (\mathcal{K}_n - zI)^{-1} \, dz,
\]
has rank $m$ as suppose the spectrum of $\mathcal{K}_n$ consists of $m$ eigenvalues $\lambda_{n,1}, \lambda_{n,2}, \ldots, \lambda_{n,m}$, counted according to their algebraic multiplicities. Define the arithmetic mean of these eigenvalues as  
\[
\hat{\lambda}_n = \frac{1}{m} \sum_{j=1}^m \lambda_{n,j}.
\]	
Consider the following result from Osborn \cite{Osborn}:
\begin{theorem} \label{th1}
	For sufficiently large $n$,  
	\[
	\hat{\delta}\left(\mathcal{R}(E), \mathcal{R}(E_n)\right) \leq C \norm{\left(\mathcal{K} - \mathcal{K}_n\right)\mathcal{K}|_{\mathcal{R}(E)}} ,
	\]
	where $C$ denotes a generic constant independent of $n$ and $\left( \mathcal{K} - \mathcal{K}_n \right)\mathcal{K}|_{\mathcal{R}(E)}$ represents the restriction of $(\mathcal{K} - \mathcal{K}_n)\mathcal{K}$ to $\mathcal{R}(E)$.
\end{theorem}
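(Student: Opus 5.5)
The plan is to compare $\mathcal{R}(E)$ and $\mathcal{R}(E_n)$ through the spectral projections, estimating the two one-sided gaps separately. For $\delta(\mathcal{R}(E),\mathcal{R}(E_n))$, take $x\in\mathcal{R}(E)$ with $\norm{x}=1$. Since $E_n x\in\mathcal{R}(E_n)$, we have $d(x,\mathcal{R}(E_n))\le \norm{(E-E_n)x}$. Subtracting the contour integrals and using the second resolvent identity gives
\[
(E-E_n)x=-\frac{1}{2\pi i}\int_\Gamma (\mathcal{K}_n-zI)^{-1}(\mathcal{K}_n-\mathcal{K})(\mathcal{K}-zI)^{-1}x\,dz .
\]
The difference $\mathcal{K}-\mathcal{K}_n$ itself does not go to zero in norm, so the key step is to produce a factor of $\mathcal{K}$ on the right. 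Since $\mathcal{R}(E)$ is invariant under $\mathcal{K}$ and $0\notin\Gamma$, I will use the identity
\[
(\mathcal{K}-zI)^{-1}=\frac{1}{z}\left[\mathcal{K}(\mathcal{K}-zI)^{-1}-I\right].
\]
On $\mathcal{R}(E)$ this gives $(\mathcal{K}-zI)^{-1}x=\frac1z\,\mathcal{K}\,y_z-\frac1z x$ with $y_z=(\mathcal{K}-zI)^{-1}x\in\mathcal{R}(E)$, and $y_z$ is bounded uniformly for $z\in\Gamma$.

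Substituting, $(E-E_n)x$ splits into two pieces. The first is
\[
\int_\Gamma \tfrac1z(\mathcal{K}_n-zI)^{-1}(\mathcal{K}_n-\mathcal{K})\mathcal{K}y_z\,dz .
\]
Because $\max_{z\in\Gamma}\norm{(\mathcal{K}_n-zI)^{-1}}$ is bounded for large $n$ (the Osborn result stated above) and $|z|$ is bounded below on $\Gamma$, this piece is bounded by $C\norm{(\mathcal{K}-\mathcal{K}_n)\mathcal{K}|_{\mathcal{R}(E)}}$. The second piece is
\[
-\int_\Gamma \tfrac1z(\mathcal{K}_n-zI)^{-1}(\mathcal{K}_n-\mathcal{K})x\,dz .
\]
To handle it, I will iterate the same trick with $\mathcal{K}_n$ in place of $\mathcal{K}$, writing $(\mathcal{K}_n-zI)^{-1}=\frac1z[(\mathcal{K}_n-zI)^{-1}\mathcal{K}_n - I]$. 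The term with $-I$ contributes $\int_\Gamma z^{-1}(\mathcal{K}_n-\mathcal{K})x\,dz=0$, provided $\Gamma$ does not enclose $0$; this holds since $\lambda\neq0$ and $\Gamma$ isolates $\lambda$. What remains involves $\mathcal{K}_n(\mathcal{K}_n-\mathcal{K})x$. For $x\in\mathcal{R}(E)$ we have $x=\lambda^{-1}\mathcal{K}x$ (in the simple or semisimple case; otherwise use $x=\mathcal{K}w$ with $w=\mathcal{K}^{-1}|_{\mathcal{R}(E)}x$, since $\mathcal{K}$ is invertible on the finite-dimensional invariant space $\mathcal{R}(E)$). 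Hence $(\mathcal{K}_n-\mathcal{K})x=(\mathcal{K}_n-\mathcal{K})\mathcal{K}w$ with $\norm{w}\le C$. This is in fact the simplest route: apply it directly to the whole integrand, writing $x=\mathcal{K}w$, so that $\norm{(\mathcal{K}-\mathcal{K}_n)(\mathcal{K}-zI)^{-1}x}=\norm{(\mathcal{K}-\mathcal{K}_n)\mathcal{K}(\mathcal{K}-zI)^{-1}w}$, using commutation and invariance. Thus $\delta(\mathcal{R}(E),\mathcal{R}(E_n))\le C\norm{(\mathcal{K}-\mathcal{K}_n)\mathcal{K}|_{\mathcal{R}(E)}}$.

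For the reverse gap $\delta(\mathcal{R}(E_n),\mathcal{R}(E))$, I will use the standard fact that when $\dim\mathcal{R}(E_n)=\dim\mathcal{R}(E)=m$ is finite and $\delta(\mathcal{R}(E),\mathcal{R}(E_n))\to0$, the two one-sided gaps are comparable: $\delta(\mathcal{R}(E_n),\mathcal{R}(E))\le C\,\delta(\mathcal{R}(E),\mathcal{R}(E_n))$. Alternatively, $E_n|_{\mathcal{R}(E)}:\mathcal{R}(E)\to\mathcal{R}(E_n)$ is an isomorphism with uniformly bounded inverse for large $n$, since $\norm{(E-E_n)|_{\mathcal{R}(E)}}\to0$ and the dimensions agree. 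Then any $y\in\mathcal{R}(E_n)$ of unit norm equals $E_nx$ with $\norm{x}\le C$, so $d(y,\mathcal{R}(E))\le\norm{E_nx-x}$, and the previous estimate applies. The main obstacle is this uniform invertibility of $E_n|_{\mathcal{R}(E)}$, which relies on the rank equality from Osborn and on the convergence $\norm{(\mathcal{K}-\mathcal{K}_n)\mathcal{K}|_{\mathcal{R}(E)}}\to0$. That convergence follows from pointwise convergence of $\mathcal{K}_n$ on the compact set $\mathcal{K}(\text{unit ball of }\mathcal{R}(E))$.
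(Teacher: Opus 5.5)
Your final argument is correct and is essentially Osborn's proof, which the paper cites rather than reproduces. You bound $\norm{(E-E_n)|_{\mathcal{R}(E)}}$ through the second resolvent identity and factor through $\mathcal{K}$ via $x=\mathcal{K}w$, using the invertibility of $\mathcal{K}|_{\mathcal{R}(E)}$; this is exactly what makes the paper's form equivalent to Osborn's $\norm{(\mathcal{K}-\mathcal{K}_n)|_{\mathcal{R}(E)}}$. You then obtain the reverse gap from the uniform invertibility of $E_n|_{\mathcal{R}(E)}$ together with $\dim\mathcal{R}(E_n)=\dim\mathcal{R}(E)$. The preliminary two-piece splitting is unnecessary and can be dropped (the $-I$ term there actually carries $z^{-2}$, not $z^{-1}$, so it vanishes regardless of where $0$ lies).
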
 
\noindent
Also, we consider a modified result of Osborn \cite[Theorem 2]{Osborn} given in Kulkarni \cite [Theorem 2.2]{RPK6} as follows:
\begin{theorem} \label{th2}
	For sufficiently large $n$,  
	\[
	| \lambda - \hat{\lambda}_n | \leq C \norm{ \mathcal{K}_n(\mathcal{K} - \mathcal{K}_n)\mathcal{K}|_{\mathcal{R}(E)}},
	\]
	where $C$ is a generic constant.
\end{theorem}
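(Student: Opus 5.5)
Theorem \ref{th2} is a modified version of Osborn's eigenvalue estimate, so I would prove it by going through Osborn's argument and tracking where an extra factor of $\mathcal{K}_n$ can be kept.

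\textbf{Step 1: reduce to finite-dimensional operators.} For $n$ large, $\Gamma\subset\rho(\mathcal{K}_n)$, the resolvents are uniformly bounded on $\Gamma$, and $E_n\to E$ pointwise. Collective compactness upgrades this to $\norm{(E-E_n)E}\to 0$. Hence $E_n|_{\mathcal{R}(E)}:\mathcal{R}(E)\to\mathcal{R}(E_n)$ is invertible for large $n$, with uniformly bounded inverse. Put
\[
\hat{\mathcal{K}}=\mathcal{K}|_{\mathcal{R}(E)}, \qquad \hat{\mathcal{K}}_n=(E_n|_{\mathcal{R}(E)})^{-1}\mathcal{K}_nE_n|_{\mathcal{R}(E)} .
\]
Both act on the $m$-dimensional space $\mathcal{R}(E)$. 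The spectrum of $\hat{\mathcal{K}}$ is $\{\lambda\}$ and that of $\hat{\mathcal{K}}_n$ is $\{\lambda_{n,j}\}$. Taking traces gives
\[
\lambda-\hat\lambda_n=\tfrac1m\,\mathrm{tr}(\hat{\mathcal{K}}-\hat{\mathcal{K}}_n),
\]
so $|\lambda-\hat\lambda_n|\le C\norm{\hat{\mathcal{K}}-\hat{\mathcal{K}}_n}$, because the trace on a fixed finite-dimensional space is bounded by a constant times the norm.

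\textbf{Step 2: write the difference as a contour integral.} Since $E_n$ commutes with $\mathcal{K}_n$,
\[
\hat{\mathcal{K}}-\hat{\mathcal{K}}_n=(E_n|_{\mathcal{R}(E)})^{-1}\bigl(E_n\mathcal{K}-\mathcal{K}_nE_n\bigr)|_{\mathcal{R}(E)}=(E_n|_{\mathcal{R}(E)})^{-1}E_n(\mathcal{K}-\mathcal{K}_n)|_{\mathcal{R}(E)} .
\]
Write $E_n=-\frac{1}{2\pi i}\int_\Gamma(\mathcal{K}_n-zI)^{-1}\,dz$ and use the identity $(\mathcal{K}_n-zI)^{-1}=\frac1z\bigl[(\mathcal{K}_n-zI)^{-1}\mathcal{K}_n-I\bigr]$, valid since $0\notin\Gamma$. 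The term $-\frac1z I$ integrates to zero over $\Gamma$, because $0$ lies outside $\Gamma$. Therefore
\[
E_n(\mathcal{K}-\mathcal{K}_n)=-\frac{1}{2\pi i}\int_\Gamma \frac1z(\mathcal{K}_n-zI)^{-1}\,dz\;\mathcal{K}_n(\mathcal{K}-\mathcal{K}_n).
\]

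\textbf{Step 3: bring in the factor $\mathcal{K}$ on the right.} On $\mathcal{R}(E)$ the restriction $\hat{\mathcal{K}}$ is invertible, because $\lambda\ne0$. So any $x\in\mathcal{R}(E)$ can be written $x=\mathcal{K}y$ with $y=\hat{\mathcal{K}}^{-1}x\in\mathcal{R}(E)$ and $\norm{y}\le C\norm{x}$. This gives
\[
\mathcal{K}_n(\mathcal{K}-\mathcal{K}_n)|_{\mathcal{R}(E)}=\mathcal{K}_n(\mathcal{K}-\mathcal{K}_n)\mathcal{K}|_{\mathcal{R}(E)}\,\hat{\mathcal{K}}^{-1}.
\]
Combining Steps 1–3 with the uniform bounds on $(\mathcal{K}_n-zI)^{-1}$ over $\Gamma$, on $1/|z|$, on $(E_n|_{\mathcal{R}(E)})^{-1}$ and on $\hat{\mathcal{K}}^{-1}$ yields the stated estimate.

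\textbf{Main obstacle.} The delicate point is Step 1: the invertibility of $E_n|_{\mathcal{R}(E)}$ with a uniformly bounded inverse, together with the identification of the spectrum of $\hat{\mathcal{K}}_n$ with the eigenvalues of $\mathcal{K}_n$ inside $\Gamma$, counted with algebraic multiplicity. This is standard collectively compact spectral theory (Osborn, Ahues et al.), and I would cite it rather than re-derive it.
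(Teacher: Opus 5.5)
Your proof is correct. The paper gives no derivation of this statement; it imports the bound from Kulkarni's modification of Osborn's Theorem~2, and your argument is a valid self-contained proof of it.

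The useful comparison is with Osborn's original Theorem~2. It starts from the same similarity $\hat{\mathcal{K}}_n=(E_n|_{\mathcal{R}(E)})^{-1}\mathcal{K}_nE_n|_{\mathcal{R}(E)}$ and the same trace identity. It then splits $\frac1m\mathrm{tr}(\hat{\mathcal{K}}-\hat{\mathcal{K}}_n)$ into a first-order dual-basis term $\frac1m\sum_j\langle(\mathcal{K}-\mathcal{K}_n)\phi_j,\phi_j^*\rangle$ plus a product remainder $\norm{(\mathcal{K}-\mathcal{K}_n)|_{\mathcal{R}(E)}}\,\norm{(\mathcal{K}^*-\mathcal{K}_n^*)|_{\mathcal{R}(E^*)}}$. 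You instead bound $\norm{\hat{\mathcal{K}}-\hat{\mathcal{K}}_n}$ as a whole and obtain the extra factors in two ways:
\begin{itemize}
\item on the left, by writing $E_n=\bigl(-\frac{1}{2\pi i}\int_\Gamma z^{-1}(\mathcal{K}_n-zI)^{-1}\,dz\bigr)\mathcal{K}_n$, which is equivalent to $E_n=(\mathcal{K}_n|_{\mathcal{R}(E_n)})^{-1}E_n\mathcal{K}_n$;
\item on the right, by inverting $\mathcal{K}|_{\mathcal{R}(E)}$.
\end{itemize}
This gives exactly the stated form, with no remainder term and no adjoint information.

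Two facts you use implicitly deserve a line each in a write-up. First, $0$ lies in the exterior of $\Gamma$: $0\in\sigma(\mathcal{K})$ for a compact operator on an infinite-dimensional space, and $\Gamma$ separates $\lambda$ from the rest of the spectrum. Second, surjectivity of $E_n|_{\mathcal{R}(E)}$ onto $\mathcal{R}(E_n)$ uses $\dim\mathcal{R}(E_n)=m$, which the paper takes from Osborn as part of its setup.
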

\noindent
The above two theorems establish the essential error bounds for the eigenelements.
Here, we will fix some notations to be used throughout the paper:\\
Let $j, k$ be non-negative integer and we set
\begin{equation*}
	D^{(j,k)}\kappa_i(s,t)=\frac{\partial^{j+k}\kappa_i}{\partial s^j \partial t ^k} (s,t), \quad \text{for} \; i=1,2.	
\end{equation*}
Define
\begin{equation*}
	C_1= \max_{0 \le j, k \le {2r+2}}\bigg \{ \sup_{ (s,t) \in \Omega_i} \left| D^{(j,k)} \kappa_i(s,t)\right| : \; \text{for}\; i=1, 2\bigg \},
\end{equation*}  
For a positive integer $\alpha$, if $x \in C^{2r+\alpha}[0,1]$, we define
\begin{equation*}
	\norm{x}_{2r + \alpha, \infty} = \max_{0 \leq i \leq 2r +\alpha} \norm{x^{(i)}}_\infty = \max_{0 \leq i \leq 2r +\alpha} \left\{ \sup_{t \in [0,1]}  \left|x^{(i)}(t)\right| \right\}.
\end{equation*}   
where $x^{(i)}$ is the $i^{th}$ derivative of the function $x$, and
in particular, 
\[
\norm{x}_\infty =  \sup_{t \in [0,1]}  \left|x(t)\right|.
\]
Now, for $x \in C[0,1]$ and $s \in [0,1]$, we have
\[
(\mathcal{K}x)(s) =  \int_{0}^{s} \kappa_1(s, t) x(t)~dt + \int_{s}^{1} \kappa_2(s, t) x(t)~dt.
\]
Using the Leibnitz rule
\begin{equation*}
	(\mathcal{K}x)'(s) = \int_{0}^{s} \frac{\partial \kappa_1}{\partial s}(s, t) x(t)~dt + \int_{s}^{1} \frac{\partial \kappa_2}{\partial s}(s, t) x(t)~dt,
\end{equation*}
\begin{equation*}
	(\mathcal{K}x)''(s) =  \left( \frac{\partial \kappa_1}{\partial s}(s, s) - \frac{\partial \kappa_2}{\partial s}(s, s) \right) x(s) + \int_{0}^{s} \frac{\partial^2 \kappa_1}{\partial s^2}(s, t) x(t)~dt + \int_{s}^{1} \frac{\partial^2 \kappa_2}{\partial s^2}(s, t) x(t)~dt,
\end{equation*}
 which implies
\begin{align*}
	| (\mathcal{K}x)'(s) | &\leq \left( \sup_{(s,t) \in \Omega_1} \left| D^{(1,0)} \kappa_1(s,t) \right|   s  \, + \sup_{(s,t) \in \Omega_2} \left| D^{(1,0)} \kappa_2(s,t) \right| (1-s) \right)  \norm{x}_\infty,
\end{align*}
\begin{align*}
	| (\mathcal{K}x)''(s) | &\leq \left( \sup_{(s,t) \in \Omega_1} \left| D^{(1,0)} \kappa_1(s,t) \right| \, + \sup_{(s,t) \in \Omega_2} \left| D^{(1,0)} \kappa_2(s,t) \right|  \right) \norm{x}_\infty \\
	& + \left( \sup_{(s,t) \in \Omega_1} \left| D^{(2,0)} \kappa_1(s,t) \right|   s  \, + \sup_{(s,t) \in \Omega_2} \left| D^{(2,0)} \kappa_2(s,t) \right| (1-s) \right)  \norm{x}_\infty.
\end{align*}
Then, 
\begin{equation} \label{Eq: 004}
	\norm{(\mathcal{K}x)^{(j)}}_\infty \leq C_1 \norm{x}_\infty, \quad \text{for}~ j=0,1,2.
\end{equation}
Let $x \in C^2[0,1]$, we have
\begin{align*}
	(\mathcal{K}x)^{(3)}(s) &=  2 \left( \frac{\partial^2 \kappa_1}{\partial s^2}(s, s) - \frac{\partial^2 \kappa_2}{\partial s^2}(s, s) \right) x(s) + \left( \frac{\partial \kappa_1}{\partial s}(s, s) - \frac{\partial \kappa_2}{\partial s}(s, s) \right) x'(s) \\ 
	&+ \int_{0}^{s} \frac{\partial^3 \kappa_1}{\partial s^3}(s, t) x(t)~dt + \int_{s}^{1} \frac{\partial^3 \kappa_2}{\partial s^3}(s, t) x(t)~dt,
\end{align*}
and
\begin{align*}
	(\mathcal{K}x)^{(4)}(s) &=  3 \left( \frac{\partial^3 \kappa_1}{\partial s^3}(s, s) - \frac{\partial^3 \kappa_2}{\partial s^3}(s, s) \right) x(s) \\
	&+ 3 \left( \frac{\partial^2 \kappa_1}{\partial s^2}(s, s) - \frac{\partial^2 \kappa_2}{\partial s^2}(s, s) \right) x'(s) + \left( \frac{\partial \kappa_1}{\partial s}(s, s) - \frac{\partial \kappa_2}{\partial s}(s, s) \right) x''(s) \\ 
	&+ \int_{0}^{s} \frac{\partial^4 \kappa_1}{\partial s^4}(s, t) x(t)~dt + \int_{s}^{1} \frac{\partial^4 \kappa_2}{\partial s^4}(s, t) x(t)~dt.
\end{align*}
This implies that, if $x \in C^2[0,1]$, then
\begin{equation*}
	\norm{(\mathcal{K}x)^{(3)}}_\infty \leq C_1 \left( \norm{x}_\infty + \norm{x'}_\infty \right), ~~
	\norm{(\mathcal{K}x)^{(4)}}_\infty \leq C_1 \left( \norm{x}_\infty + \norm{x'}_\infty +  \norm{x''}_\infty \right).
\end{equation*}
Let $r \geq 1$, $\kappa$ be the Green's function-type kernel (as defined in section 2) and $x \in C^{2r}[0,1]$. Then proceeding similarly, we obtain
\begin{equation} \label{Eq: 005}
	\norm{(\mathcal{K}x)^{(2r+1)}}_\infty \leq C_1 \left( \norm{x}_\infty + \norm{x'}_\infty + \cdots + \norm{x^{(2r-1)}}_\infty \right),
\end{equation}
\begin{equation} \label{Eq: 006}
	\norm{(\mathcal{K}x)^{(2r+2)}}_\infty \leq C_1 \left( \norm{x}_\infty + \norm{x'}_\infty + \cdots + \norm{x^{(2r)}}_\infty \right).
\end{equation}


\section{Approximations based on projections}

Denote the uniform partition of $[0,1]$ as
$$
\Delta^{(n)} := \{ 0 =t_0 < t_1 < \cdots <t_n=1\}, \quad \text{where} ~ t_j = \displaystyle{\frac{j}{n}}, \quad j = 0, 1, 2, \ldots, n.
$$ 
Denote $\Delta_{j} = [t_{j-1}, t_j]$. Let $r$ be a non-negative integer, and define
$$
\mathcal{X}_n = \left\{x \in L^\infty[0, 1] : x|_{\Delta_{j}} \text{ is a polynomial of even degree} \leq 2r, \; j=1,2, \ldots, n \right\}. 
$$ 
This is a finite dimensional space with $\dim(\mathcal{X}_n) = n(2r+1)$. Since continuity at the partition points is not imposed, the approximation space $\mathcal{X}_n$ is a subspace of $L^\infty[0,1]$. We consider two projection operators mapping into $\mathcal{X}_n$: the orthogonal projection $P_n$, obtained as the restriction to $L^\infty[0,1]$ of the $L^2$-orthogonal projection onto $\mathcal{X}_n$, and the interpolatory projection $Q_n:\mathcal{C}[0,1]\mapsto\mathcal{X}_n$, defined by interpolation at $2r+1$ points on each subinterval. The projections are discussed in the following subsections:

\subsection{Orthogonal projection}

Let $\mathcal{X} = L^2[0, 1]$ denote the space of real-valued, square-integrable functions defined on the interval $[0, 1]$. The inner product in $L^2[0, 1]$, denoted by $\langle \cdot, \cdot \rangle$, is defined as  
\[
\langle x, y \rangle = \int_0^1 x(t) y(t) \, dt, \quad \forall x, y \in L^2[0, 1].
\]
For \(\eta = 0, 1, \ldots, 2r\), let \(L_{\eta}\) represent the Legendre polynomial of degree \(\eta\) defined on the interval \([-1,1]\). For each \(j = 1, \ldots, n\) and \(\eta = 0, 1, \ldots, 2r\), we define the function \(\psi_{j,\eta}(t)\) as follows:
\[
\psi_{j,\eta}(t) =
\begin{cases}
	\sqrt{\frac{2}{h}} L_{\eta} \left( \frac{2t - t_j - t_{j-1}}{h} \right), & t \in (t_{j-1}, t_j], \\
	0, & \text{otherwise}.
\end{cases}
\]
At the point \(t_0\), we set \(\psi_{1,\eta}(t_0) = \sqrt{\frac{2}{h}} L_{\eta}(-1)\). The supremum norm of \(\psi_{j,\eta}\) is given by
\[
\norm{\psi_{j,\eta}}_{\infty} = \max_{t \in [t_{j-1}, t_j]} |\psi_{j,\eta}(t)| = \sqrt{\frac{2}{h}} \norm{L_{\eta}}_{\infty}.
\]
For functions \(f, g \in C(\Delta_j)\), we define the inner product on the interval \(\Delta_j\) as
\[
\langle f, g \rangle_{\Delta_j} = \int_{t_{j-1}}^{t_j} f(t) g(t) \, dt.
\]
This inner product is symmetric, i.e., \(\langle f, g \rangle_{\Delta_j} = \langle g, f \rangle_{\Delta_j}\). The supremum norm of \(f\) on \(\Delta_j\) is defined as
\[
\norm{f}_{\Delta_j, \infty} = \max_{t \in [t_{j-1}, t_j]} |f(t)|.
\]
Using this, we can bound the inner product as
\[
\langle f, g \rangle_{\Delta_j} \leq \norm{f}_{\Delta_j, \infty} \norm{g}_{\Delta_j, \infty} h. 
\]
It is easy to see that the set \(\{\psi_{j,\eta}: j = 1, \ldots, n, \eta = 0, \ldots, 2r\}\) forms an orthonormal basis for the space \(\mathcal{X}_n\). Let \(\mathcal{P}_{2r, \Delta_j}\) denote the space of polynomials of degree at most \(2r\) on \(\Delta_j\). The orthogonal projection \(P_{n,j} : C(\Delta_j) \mapsto \mathcal{P}_{2r, \Delta_j}\) is defined by
\[
P_{n,j} x = \sum_{\eta=0}^{2r} \langle x, \psi_{j,\eta} \rangle_{\Delta_j} \psi_{j,\eta}.
\]
This projection satisfies the following properties:
\[
\langle P_{n,j} x, y \rangle_{\Delta_j} = \langle x, P_{n,j} y \rangle_{\Delta_j}, \quad P_{n,j}^2 = P_{n,j}, \quad \text{and} \quad P_{n,j} P_{n,i} = 0 \quad \text{for } i \neq j.
\]
Additionally, we have the bound as
\[
\norm{P_{n,j} x}_{\Delta_j, \infty} \leq \sum_{\eta=0}^{2r} \langle x, \psi_{j,\eta} \rangle_{\Delta_j} \|\psi_{j,\eta}\|_{\Delta_j, \infty} \leq 2 \left( \sum_{\eta=0}^{2r} \norm{L_{\eta}}_{\infty}^2 \right) \norm{x}_{\infty}.
\]
Thus, an orthogonal projection \(P_n : C[0,1] \mapsto \mathcal{X}_n\) is defined as
\begin{equation} \label{Eq: 007}
	P_n x = \sum_{j=1}^{n} P_{n,j} x.
\end{equation}
By applying the Hahn-Banach extension theorem, following the approach in Atkinson et al. \cite{KEA-IG-IS}, the projection \(P_n\) can be extended to the space \(L^\infty[0,1]\). This extended projection satisfies \(P_n^2 = P_n\) and 
\begin{equation} \label{Eq: 008}
	\norm{P_n} \leq 2 \sum_{\eta=0}^{2r} \norm{L_{\eta}}_{\infty}^2 =C_2. 
\end{equation}
Finally, if \(x \in C^{2r+1}(\Delta_j)\), we have the standard estimate from Chatelin-Lebbar \cite{Cha-Leb2} as
\begin{equation} \label{Eq: 009}
	\norm{(I - P_{n,j}) x}_{\Delta_j, \infty} \leq C_3 \norm{x^{(2r+1)}}_{\Delta_j, \infty} h^{2r+1},
\end{equation}
where $C_3$ is a constant independent of $h$. Moreover, if $x \in C^{\beta}[0,1]$ with $\beta = \min\{\alpha,\, 2r+1\}$, then one can see that
\begin{equation} \label{Eq: 0010}
	\norm{(I - P_{n})x}_\infty \leq C_3 \norm{x^{(\beta)}}_\infty h^{\beta}.
\end{equation}

\subsection{Interpolatory projection}

Let $\mathcal{X} = C[0,1]$ be the space of continuous real valued function on the interval $[0, 1]$,  equipped with the supremum norm defined as \(\norm{x}_\infty =  \sup_{t \in [0,1]}  \left|x(t)\right|\). In each sub-interval $[t_{j-1}, t_j]$, consider $2r+1$ equally spaced interpolation points as
\begin{equation*}
	\tau_j^i = t_{j-1} + \frac{ih}{2r}, \quad i = 0, 1, \ldots, 2r  \; \text{ for } \; r \geq 1, \quad \text{and} \quad \tau_j = \frac{t_{j-1} + t_j}{2} \; \text{ for }  \; r = 0.
\end{equation*}
Note that these interpolation nodes are equidistant in each subinterval. The interpolatory operator $Q_n : \mathcal{X} \mapsto \mathcal{X}_n $ is defined as
\begin{equation} \label{Eq: 0011}
	Q_nx(\tau_j^i) = x(\tau_j^i), \quad i = 0, 1, 2, \ldots, 2r; ~ j = 1, 2, \ldots, n.
\end{equation}
Note that \(Q_nx \to x ~ \text{for all} \; x \in C[0,1]\). By employing the Hahn-Banach extension theorem, \(Q_n\) can be extended to \(L^\infty[0,1]\), making \(Q_n : L^\infty[0,1] \mapsto \mathcal{X}_n\) a projection. Define the polynomial 
\begin{eqnarray*}
	\Psi_j(t) & = & \prod_{i=0}^{2r} \left(t - \tau_j^i \right), \quad j = 1, 2, \ldots, n.
\end{eqnarray*}
As \(Q_{n,j}x\) (where \(Q_{n,j}\) is \(Q_{n}\) restricted on \(\Delta_j\)) interpolates \(x\) at the points \(\tau_j^0, \tau_j^1, \ldots, \tau_j^{2r}\), we have
\begin{eqnarray*}
	x(t) - Q_{n,j}x(t) & = & \Psi_j(t) \left[\tau_j^0, \tau_j^1, \ldots, \tau_j^{2r}, t\right]x, \quad t \in \Delta_j,
\end{eqnarray*}
where $\left[\tau_j^0, \tau_j^1, \ldots, \tau_j^{2r}, t\right]x$ is the $(2r+1)^{th}$ divided difference of $x \in C[0, 1]$ at $\tau_j^0, \ldots, \tau_j^{2r}$ defined as
\begin{eqnarray*}
	\left[\tau_j^0, \ldots, \tau_j^{2r}\right]x &=& \frac{\left[\tau_j^1, \ldots, \tau_j^{2r}\right]x - \left[\tau_j^0, \ldots, \tau_j^{2r-1}\right]x}{\tau_j^{2r} - \tau_j^0}.
\end{eqnarray*}
If \(x \in C^{2r+1}(\Delta_j)\), a standard result (see \cite{KEA-IG-IS}) implies that
\begin{equation*} 
	\norm{(I-Q_{n,j})x}_{\Delta_j, \infty}  \leq C_4 \norm{x^{(2r+1)}}_{\Delta_j, \infty} h^{2r+1},
\end{equation*}
where $C_4$ is a constant independent of $h$. Indeed, if $x \in C^{\beta}[0,1]$ with $\beta = \min\{\alpha,\, 2r+1\}$, then
\begin{equation} \label{Eq: 0012}
	\norm{(I - Q_{n})x}_\infty \leq C_4 \norm{x^{(\beta)}}_\infty h^{\beta}.
\end{equation}
\subsection{Estimates in the methods of approximation}

The classical projection method for approximating the eigenvalue problem \eqref{Eq: 001} is obtained by replacing $\mathcal{K}_n$ by $\pi_n \mathcal{K}$ in \eqref{Eq: 003} as
\begin{equation} \label{Eq: 0013}
	\pi_n \mathcal{K} \varphi_n = \lambda_n \varphi_n,  ~~\lambda_n \in \mathbb{C} \setminus \{0\}, ~ \varphi_n \in \mathcal{X}_n, ~\norm{\varphi_n}_\infty =1.
\end{equation}
One step iteration refine the eigenvector, proposed by Sloan \cite{Sloan1}, is given by
\begin{equation} \label{Eq: 0014}
	\varphi_n^S = \frac{1}{\lambda_n} \mathcal{K} \varphi_n,
\end{equation}
which satisfies the condition
\[
\mathcal{K} \pi_n \varphi_n^S = \lambda_n \varphi_n^S.
\]
In the modified projection method introduced by Kulkarni \cite{RPK5}, the operator $\mathcal{K}_n$ is replaced with a finite-rank approximation, defined as
\begin{equation} \label{Eq: 0015}
	\mathcal{K}_n^M = \pi_n \mathcal{K} +\mathcal{K} \pi_n - \pi_n \mathcal{K} \pi_n.
\end{equation}
The approximation \eqref{Eq: 003} now requires finding $\lambda_n^M \in \mathbb{C} \setminus \{0\}$ and $\varphi_n^M \in \mathcal{X}$ such that $\norm{\varphi_n^M}_\infty = 1$ and
\begin{equation} \label{Eq: 0016}
	\mathcal{K}_n^M \varphi_n^M = \lambda_n^M \varphi_n^M. 
\end{equation}
The corresponding iterated eigenvector for the modified method is defined as
\begin{equation} \label{Eq: 0017}
	\widetilde{\varphi}_n^M = \frac{1}{\lambda_n^M} \mathcal{K} \varphi_n^M. 
\end{equation}

Let $\{\mathcal{K}_n\}$ be a sequence of bounded linear operators on $\mathcal{X}$ converging to $\mathcal{K}$ in the collectively compact sense. We consider the
standard and modified projection-based approximations defined by
$\mathcal{K}_n=\pi_n\mathcal{K}$ and $\mathcal{K}_n=\mathcal{K}_n^{M}$, respectively.
Let $\lambda$ be a nonzero eigenvalue of $\mathcal{K}$ with algebraic multiplicity $m$,
and let $\hat{\lambda}_n$ and $\hat{\lambda}_n^{M}$ denote the arithmetic means of the
$m$ eigenvalues of $\pi_n\mathcal{K}$ and $\mathcal{K}_n^{M}$, respectively, lying inside
a simple closed contour $\Gamma$ that encloses $\lambda$. Let $E$, $E_n$, and $E_n^{M}$ denote the corresponding spectral projections associated with $\Gamma$. The associated spectral subspaces are denoted by $\mathcal{R}(E)$, $\mathcal{R}(E_n)$, and
$\mathcal{R}(E_n^{M})$. Then, the following results can be obtained from Theorem \ref{th1} and Theorem \ref{th2} as given in Bouda et al. \cite{HB-CA-ZEA-KK}:\\
For any $\varphi_n \in \mathcal{R}(E_n)$, define $\varphi_n^{S} = \mathcal{K}\varphi_n$. Then, for all
sufficiently large $n$,
\begin{equation} \label{Eq: 0018}
	\norm{\varphi_n - E \varphi_n} \leq \hat{\delta} \big(\mathcal{R}(E), \mathcal{R}(E_n)\big) \leq C \norm{(\mathcal{K} - \pi_n \mathcal{K})\mathcal{K}|_{\mathcal{R}(E)}},
\end{equation}
\begin{equation} \label{Eq: 0019}
	\norm{\varphi_n^S - E \varphi_n^S} \leq \delta\big(\mathcal{R}(E), \mathcal{K} \mathcal{R}(E_n)\big) \leq C \norm{\mathcal{K}(\mathcal{K} - \pi_n \mathcal{K})\mathcal{K}|_{\mathcal{R}(E)}},
\end{equation}
and 
\begin{equation} \label{Eq: 0020}
	| \lambda - \hat{\lambda}_n | \leq C \norm{ \mathcal{K}(\mathcal{K} - \pi_n \mathcal{K})\mathcal{K}|_{\mathcal{R}(E)}}.
\end{equation}

\medskip
\noindent
Also, for any $\varphi_n^{M} \in \mathcal{R}(E_n^{M})$, define $\widetilde{\varphi}_n^{M} = \mathcal{K}\varphi_n^{M}$. Then, for all sufficiently large $n$, 
\begin{equation} \label{Eq: 0021}
	\norm{\varphi_n^M - E \varphi_n^M} \leq \hat{\delta} \big(\mathcal{R}(E), \mathcal{R}(E_n^M)\big) \leq C \norm{(\mathcal{K} - \mathcal{K}_n^M)\mathcal{K}|_{\mathcal{R}(E)}},
\end{equation}
\begin{equation} \label{Eq: 0022}
	\norm{\widetilde{\varphi}_n^M - E \widetilde{\varphi}_n^M} \leq \delta\big(\mathcal{R}(E), \mathcal{K} \mathcal{R}(E_n^M)\big) \leq C \norm{\mathcal{K}(\mathcal{K} - \mathcal{K}_n^M)\mathcal{K}|_{\mathcal{R}(E)}},
\end{equation}
and 
\begin{equation} \label{Eq: 0023}
	| \lambda - \hat{\lambda}_n^M | \leq C \norm{ \mathcal{K}(\mathcal{K} -  \mathcal{K}_n^M)\mathcal{K}|_{\mathcal{R}(E)}}.
\end{equation}


\section{Convergence orders in the approximations}

In this section, we focus on the case where $\lambda$ is a simple eigenvalue of $\mathcal{K}$. 
For the projection methods considered earlier, we take $\pi_n = P_n$ for convenience, 
in which case the scheme reduces to the Galerkin method and its variants. 
Similarly, setting $\pi_n = Q_n$ yields the collocation method and its variants. 
Accordingly, we establish our main results for both the Galerkin and collocation methods..

\subsection{Orthogonal projection based approximations}

In this subsection, we present essential findings that will be instrumental in determining approximate eigenelements through orthogonal projection.
\begin{proposition} \label{pro1}
	For $r \geq 1$, if $x \in C^{2r+1}[0,1]$, then
	\begin{eqnarray*}
		\norm{\mathcal{K}(I-P_n)x }_\infty \leq C_5 \norm{x^{(2r+1)}}_{ \infty} h^{2r + 3},
	\end{eqnarray*}
	where $C_5$ is a constant independent of $h$. Also,
	\begin{eqnarray*} 
		\norm{\mathcal{K}(I-P_n)x }_\infty \leq C_6 \norm{x}_\infty h^2,
	\end{eqnarray*}
	for some constant $C_6$ independent of $h$.
\end{proposition}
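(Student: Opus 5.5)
The plan is to use the self-adjointness of the local orthogonal projections to move $(I-P_n)$ from $x$ onto the kernel, and then split the sum over subintervals according to whether the kernel is smooth there. Fix $s\in[0,1]$ and write $\kappa_s(t)=\kappa(s,t)$. Since $P_{n,j}$ is self-adjoint and idempotent on $\Delta_j$, we have $\langle (I-P_{n,j})x,\, y\rangle_{\Delta_j}=\langle (I-P_{n,j})x,\,(I-P_{n,j})y\rangle_{\Delta_j}$. Hence
\begin{equation*}
	\mathcal{K}(I-P_n)x(s)=\sum_{j=1}^{n}\big\langle (I-P_{n,j})x,\;(I-P_{n,j})\kappa_s\big\rangle_{\Delta_j}.
\end{equation*}
The factor $(I-P_{n,j})x$ is controlled by \eqref{Eq: 009}, and $(I-P_{n,j})\kappa_s$ is controlled by the regularity of $\kappa$ in $t$. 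Each term is then bounded by $h$ times the product of the two sup-norms on $\Delta_j$.

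Next I split the indices $j$ into the (at most two) intervals $\Delta_j$ containing $s$ and all the others.

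For $\Delta_j$ not containing $s$, the restriction $\kappa_s|_{\Delta_j}$ equals $\kappa_1(s,\cdot)$ or $\kappa_2(s,\cdot)$, so it is $C^2$ with $t$-derivatives bounded by $C_1$. Because $r\ge 1$, $P_{n,j}$ reproduces polynomials of degree $\le 2r$, in particular those of degree $\le 1$. Applying the linear Taylor polynomial of $\kappa_s$ on $\Delta_j$ together with $\norm{P_{n,j}}\le C_2$ gives $\norm{(I-P_{n,j})\kappa_s}_{\Delta_j,\infty}\le (1+C_2)C_1h^2$. Combined with \eqref{Eq: 009}, each such term is at most $C\,h\cdot h^{2r+1}\norm{x^{(2r+1)}}_\infty\cdot h^2$. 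Summing over at most $n=1/h$ intervals gives $C h^{2r+3}\norm{x^{(2r+1)}}_\infty$.

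For the interval containing $s$, $\kappa_s$ is only Lipschitz, since its one-sided $t$-derivatives are bounded by $C_1$. Reproduction of constants then gives $\norm{(I-P_{n,j})\kappa_s}_{\Delta_j,\infty}\le (1+C_2)C_1h$. The contribution is therefore $h\cdot h^{2r+1}\cdot h=h^{2r+3}$ times $\norm{x^{(2r+1)}}_\infty$. Taking the supremum over $s$ yields the first estimate.

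The second estimate follows from the same splitting, replacing \eqref{Eq: 009} by the crude bound $\norm{(I-P_{n,j})x}_{\Delta_j,\infty}\le(1+C_2)\norm{x}_\infty$ from \eqref{Eq: 008}. The regular intervals then contribute $n\cdot h\cdot h^2=h^2$, and the singular interval contributes $h\cdot h=h^2$.

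The main obstacle is the bookkeeping near the diagonal. One must check that the Lipschitz bound on the kernel in the interval containing $s$ holds uniformly in $s$, including when $s$ is a partition point and two intervals are involved. One must also justify using the self-adjoint identity for the Hahn--Banach-extended $P_n$. For the latter it suffices that $x$ is continuous, so that $P_n x=\sum_j P_{n,j}x$ is given by the explicit formula \eqref{Eq: 007}.
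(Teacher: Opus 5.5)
Your proof is correct and has the same skeleton as the paper's. Both use the identity $\mathcal{K}(I-P_n)x(s)=\sum_j\langle (I-P_{n,j})\kappa_s,(I-P_{n,j})x\rangle_{\Delta_j}$, both isolate the subinterval containing $s$, and both subtract the constant $\kappa(s,s)$ there. (The paper uses $\langle \gamma_i,(I-P_{n,i})x\rangle_{\Delta_i}=0$ rather than $\norm{P_{n,i}}\le C_2$, which only saves the factor $1+C_2$.)

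You differ on the remaining subintervals. The paper applies \eqref{Eq: 009} to $\kappa_s$ as well, which gives $h^{2r+1}$ for the kernel factor and a total of order $h^{4r+2}$. That total is below $h^{2r+3}$ only because $r\ge 1$. It also requires $2r+1$ derivatives of $\kappa_i$ in $t$, which are built into $C_1$ but exceed what the standing hypothesis $\alpha>r$ guarantees, and it forces a separate nodal case $s=t_i$. Your linear-reproduction argument needs only two $t$-derivatives, gives $O(h^{2r+3})$ directly, and absorbs the nodal case into the ``at most two'' intervals containing $s$.

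The more substantive point is the second estimate. The paper's proof presents it as the case $r=0$, while your proof establishes it for $r\ge 1$ and genuinely uses $r\ge1$ through linear reproduction. Your reading is the defensible one. For $r=0$ your splitting gives only $n\cdot h\cdot h=O(h)$ from the regular intervals, and this is sharp. By self-adjointness, $\norm{\mathcal{K}(I-P_n)}=\sup_s\norm{(I-P_n)\kappa_s}_{L^1}$, and for piecewise constants this behaves like $\frac{h}{4}\sup_s\int_0^1|\partial_t\kappa(s,t)|\,dt$, about $h/8$ for the test kernel of Section 5. The paper's own $r=0$ argument is consistent with this: it yields only $C_1C_3\norm{x}_\infty h^{2r+1}=O(h)$ at the nodes, and its interior sum over $j\neq i$ is likewise $(n-1)\cdot O(h^2)=O(h)$. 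So you lose nothing by not covering $r=0$. Be aware, though, that the $O(h^2)$ operator-norm bound the paper later invokes for $r=0$ does not follow from this proposition.
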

\begin{proof} First we prove the case \(r \geq 1\).
	Let \(s \in [0,1]\) and define \(\kappa_s(t) = \kappa(s,t)\). Then, consider
	\begin{align} \label{Eq: 0024}
		\mathcal{K}(I - P_n)x(s) &= \int_{0}^{1} \kappa_s(t) (I - P_n) x(t) \, dt \notag \\
		&= \sum_{j=1}^{n} \int_{t_{j-1}}^{t_j} \kappa_s(t) (I - P_{n,j}) x(t) \, dt \notag \\
		&= \sum_{j=1}^{n} \langle \kappa_s, (I - P_{n,j}) x \rangle_{\Delta_j}  \notag \\
		&= \sum_{j=1}^{n} \langle (I - P_{n,j}) \kappa_s, (I - P_{n,j}) x \rangle_{\Delta_j}.
	\end{align}
	We analyze the two cases separately:\\
	\noindent
	Case I: Let $s \in (t_{i-1}, t_i)$, for $i = 1,2,\ldots,n$. Thus, the above expression can be rewritten as
	\begin{equation} \label{Eq: 0025}
		\mathcal{K}(I - P_n)x(s) = \sum_{\substack{j=1 \\  j\ne i} }^{n} \langle (I - P_{n,j}) \kappa_s, (I - P_{n,j}) x \rangle_{\Delta_j} + \langle (I - P_{n,i}) \kappa_s, (I - P_{n,i}) x \rangle_{\Delta_i}.
	\end{equation}
	For $j \neq i$, we note that both $\kappa_s, x \in C^{2r+1}(\Delta_j)$. Thus, applying the estimate \eqref{Eq: 009}, we have
	\begin{equation} \label{Eq: 0026}
		\bigg|\sum_{\substack{j=1 \\  j\ne i} }^{n} \langle (I - P_{n,j}) \kappa_s, (I - P_{n,j}) x \rangle_{\Delta_j}\bigg|\leq (C_3)^2 C_1 \norm{x^{(2r+1)}}_\infty (n-1) h^{4r+3}.
	\end{equation}
	Now, for the case $j = i$, observe that $\kappa_s$ is only continuous on $\Delta_i$. We define a constant function as
	\begin{equation*}
		\gamma_i(t) = \kappa_s(s), \quad t \in \Delta_i.
	\end{equation*}
	Then, we rewrite the term as
	\begin{equation*}
		\langle (I - P_{n,i}) \kappa_s, (I - P_{n,i}) x \rangle_{\Delta_i} = \langle \kappa_s - \gamma_i, (I - P_{n,i}) x \rangle_{\Delta_i}.
	\end{equation*}
	For $t \in [t_{i-1}, t_i]$, using the mean value theorem, we have
	\begin{equation*}
		\kappa_s(t) - \gamma_i(t) = \begin{cases}
			D^{(0,1)} \kappa_1(s, \nu_1) (t - s), & \nu_1 \in (t, s), \\
			D^{(0,1)} \kappa_2(s, \nu_2) (t - s), & \nu_2 \in (s, t).
		\end{cases}
	\end{equation*}
	Further using the bound \eqref{Eq: 009}, we get
	\begin{equation} \label{Eq: 0027}
		|\langle (I - P_{n,i}) \kappa_s, (I - P_{n,i}) x \rangle_{\Delta_i}| \leq C_3 C_1  \norm{x^{(2r+1)}}_\infty h^{2r+3}.
	\end{equation}
	Combining the inequalities \eqref{Eq: 0025}, \eqref{Eq: 0026} and \eqref{Eq: 0027}, we conclude
	\begin{equation} \label{Eq: 0028}
		|\mathcal{K}(I - P_n)x(s)| \leq (C_3)^2 C_1 \norm{x^{(2r+1)}}_\infty h^{2r+3}.
	\end{equation}
	\noindent
	Case II: Whenever $s = t_i$ for some $i = 0,1,\ldots,n$. Since $\kappa_s \in C^{2r+1}(\Delta_j)$ for each $j = 1, \ldots, n$ and given that $x \in C^{2r+1}[0,1]$, we use inequality \eqref{Eq: 009} to obtain
	\begin{align*} 
		\max_{0 \leq i \leq n} |\mathcal{K}(I - P_n)x(t_i)| 
		&\leq \sum_{j=1}^{n} \norm{(I - P_{n,j})\kappa_s}_{\Delta_j, \infty} \norm{(I - P_{n,j})x}_{\Delta_j, \infty} h \\
		&\leq (C_3)^2 C_1 \norm{x^{(2r+1)}}_\infty h^{4r+2}.
	\end{align*}
	Finally, combining this estimate with \eqref{Eq: 0028}, the desired result follow as
	\begin{equation*}
		\norm{\mathcal{K}(I-P_n)x }_\infty \leq C_5 \norm{x^{(2r+1)}}_{ \infty} h^{2r + 3},
	\end{equation*}
	where $C_5 = (C_3)^2 C_1 $ is a constant independent of $h$.\\
	To establish the other bound of the proposition (for \(r = 0\)), let us consider $x \in C[0,1]$. Suppose $s = t_i$ for some index $i$. Then,
	\begin{equation} \label{Eq: 0029}
		\left| \mathcal{K}(I - P_n)x(s) \right| \leq C_1 C_3 \norm{x}_\infty h^{2r+1}.
	\end{equation}
	Now, for $s \in (t_{i-1}, t_i)$, we express
	\begin{equation*}
		\mathcal{K}(I - P_n)x(s) =\sum_{\substack{j=1 \\  j\ne i} }^{n} \langle (I - P_{n,j}) \kappa_s,  x \rangle_{\Delta_j} + \langle (I - P_{n,i}) (\kappa_s - \gamma_i),x \rangle_{\Delta_i},
	\end{equation*}
	which leads to the estimate
	\begin{equation*}
		\left| \mathcal{K}(I - P_n)x(s) \right| \leq (1 +C_2 + C_3) C_1 \norm{x}_\infty h^2.
	\end{equation*}
	By combining \eqref{Eq: 0029} with the above result, we arrive at
	\begin{equation*}
		\norm{\mathcal{K}(I - P_n)x}_\infty \leq C_6 \norm{x}_\infty h^2,
	\end{equation*}
	where $C_6 = (1 +C_2 + C_3) C_1$ is a constant. This completes the proof.
\end{proof}

\begin{lemma} \label{lem1}
	Let $ \mathcal{K}$ be the linear integral operator defined by \eqref{Eq: 002} with a Green's function-type kernel. Let $P_n$ be the orthogonal projection onto the approximating space $\mathcal{X}_n$ defined by \eqref{Eq: 007}. Then 
	\begin{eqnarray*} 
		\norm{(\mathcal{K} - P_n \mathcal{K}) \mathcal{K} |_{\mathcal{R}(E)} } = 	\begin{cases}
			O\left(h^{2r+1}\right) & \text{for } r \geq 1, \\
			O\left(h\right) & \text{for } r = 0,
		\end{cases}
	\end{eqnarray*}
	and	
	\begin{eqnarray*}
		\norm{ \mathcal{K} (\mathcal{K} - P_n \mathcal{K}) \mathcal{K} |_{\mathcal{R}(E)} } =
		\begin{cases}
			O\left(h^{2r+3}\right) & \text{for } r \geq 1, \\
			O\left(h^{2}\right) & \text{for } r = 0.
		\end{cases}
	\end{eqnarray*}
\end{lemma}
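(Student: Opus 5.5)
Since $\lambda$ is a simple eigenvalue, $\mathcal{R}(E)$ is one-dimensional, spanned by an eigenfunction $\varphi$ with $\mathcal{K}\varphi = \lambda\varphi$. Both norms in the statement therefore reduce to norms of single functions:
\[
(\mathcal{K}-P_n\mathcal{K})\mathcal{K}\varphi = \lambda (I-P_n)\mathcal{K}\varphi = \lambda^2 (I-P_n)\varphi,
\qquad
\mathcal{K}(\mathcal{K}-P_n\mathcal{K})\mathcal{K}\varphi = \lambda^2 \mathcal{K}(I-P_n)\varphi .
\]
It suffices to bound $\norm{(I-P_n)\varphi}_\infty$ and $\norm{\mathcal{K}(I-P_n)\varphi}_\infty$, normalising $\norm{\varphi}_\infty = 1$.

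The key preliminary step is the regularity of $\varphi$. Write $\varphi = \lambda^{-1}\mathcal{K}\varphi$. By \eqref{Eq: 004}, $\varphi \in C^2[0,1]$. Then bootstrap using the differentiation formulas of Section 2: each differentiation of $\mathcal{K}x$ gains one more derivative of $x$ through the jump terms $D^{(k,0)}\kappa_1(s,s) - D^{(k,0)}\kappa_2(s,s)$. Hence if $\varphi \in C^{2k}$, then $\mathcal{K}\varphi \in C^{2k+2}$, as in \eqref{Eq: 005}--\eqref{Eq: 006}. Inducting, with the kernel smoothness supplied by the definition of $C_1$ (derivatives up to order $2r+2$), gives $\varphi \in C^{2r+2}[0,1]$. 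In particular $\norm{\varphi^{(2r+1)}}_\infty \le C\norm{\varphi}_\infty$ with $C$ independent of $n$. This bootstrap is the main point needing care. One must check that the regularity assumed on $\kappa_i$ suffices at every stage, i.e.\ that the constant $C_1$ with $j,k \le 2r+2$ covers all the derivatives appearing.

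For $r \ge 1$, apply \eqref{Eq: 009} on each subinterval (equivalently \eqref{Eq: 0010} with $\beta = 2r+1$) to get $\norm{(I-P_n)\varphi}_\infty \le C_3\norm{\varphi^{(2r+1)}}_\infty h^{2r+1} = O(h^{2r+1})$. For the second norm, Proposition \ref{pro1} with $x = \varphi \in C^{2r+1}[0,1]$ gives $\norm{\mathcal{K}(I-P_n)\varphi}_\infty \le C_5\norm{\varphi^{(2r+1)}}_\infty h^{2r+3} = O(h^{2r+3})$.

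For $r = 0$, $\mathcal{X}_n$ consists of piecewise constants. The $P_{n,j}$ estimate with one derivative gives $\norm{(I-P_n)\varphi}_\infty \le C\norm{\varphi'}_\infty h = O(h)$: $P_{n,j}\varphi$ is the mean of $\varphi$ over $\Delta_j$, so the mean value theorem suffices. For the second norm, the second bound of Proposition \ref{pro1} gives $\norm{\mathcal{K}(I-P_n)\varphi}_\infty \le C_6\norm{\varphi}_\infty h^2 = O(h^2)$. That bound only uses continuity of $x$ and the $h^2$ estimate from the diagonal subinterval and the smooth pieces, so it applies here. Multiplying by $|\lambda|^2$ completes both cases.
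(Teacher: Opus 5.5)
Your argument is essentially the paper's. Since $\mathcal{K}^2\varphi=\lambda^2\varphi$, the function $\mathcal{K}y$ to which the paper applies \eqref{Eq: 0010} and Proposition~\ref{pro1} is exactly your $\lambda^2\varphi$, and the same two estimates do the work. The difference is how regularity is justified. The paper simply assumes $\varphi\in C^{2r+1}[0,1]$ and appeals to \eqref{Eq: 004}--\eqref{Eq: 005}, which for $r\ge 2$ themselves need derivatives of $\varphi$. Your bootstrap from $\varphi=\lambda^{-1}\mathcal{K}\varphi$ actually proves it, which is an improvement. The price is that you use simplicity to reduce to one eigenfunction. Section 4 assumes simplicity anyway, and for multiplicity $m$ the same bootstrap works because $\mathcal{K}$ is invertible on the finite-dimensional invariant space $\mathcal{R}(E)$.

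The step that fails as you justify it is the $r=0$ estimate of $\norm{\mathcal{K}(I-P_n)\varphi}_\infty$. The bound $\norm{\mathcal{K}(I-P_n)x}_\infty\le C_6\norm{x}_\infty h^2$ is false for merely continuous $x$. Each of the $n-1$ off-diagonal terms $\langle (I-P_{n,j})\kappa_s, x\rangle_{\Delta_j}$ is $O(h^2)$, so their sum is only $O(h)$, and this is sharp. Indeed $\mathcal{K}(I-P_n)x(s)=\langle (I-P_n)\kappa_s,x\rangle$, and $\norm{(I-P_n)\kappa_s}_{L^1}$ is of exact order $h$; for the kernel of Section 5 it is about $s(1-s)h/2$. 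Taking $x$ close to $\mathrm{sign}((I-P_n)\kappa_s)$ then gives $\norm{\mathcal{K}(I-P_n)}\ge ch$. The paper's own proof of Proposition~\ref{pro1} obtains only $h^{2r+1}=h$ at the nodes in \eqref{Eq: 0029}, and its proof of Lemma~\ref{lem1} uses this same faulty bound.

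The repair follows from your regularity result. With $\varphi\in C^1$, write $\mathcal{K}(I-P_n)\varphi(s)=\sum_j\langle (I-P_{n,j})\kappa_s,(I-P_{n,j})\varphi\rangle_{\Delta_j}$. Each off-diagonal term is $O(\norm{\varphi'}_\infty h^3)$, and the diagonal term is also $O(h^3)$ using $\kappa_s-\gamma_i=O(h)$, so the total is $O(h^2)$. This is just the first part of Proposition~\ref{pro1} run with $r=0$, where $h^{4r+2}=h^2$ dominates $h^{2r+3}$. With that replacement your proof is complete.
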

\begin{proof} Note that 	
	\begin{equation*}
		\norm{(\mathcal{K} - P_n \mathcal{K}) \mathcal{K} |_{\mathcal{R}(E)} } = \sup_{\varphi \in \mathcal{R}(E)}\left\{ \norm{(\mathcal{K} - P_n \mathcal{K}) \mathcal{K} \varphi }_\infty : \norm{\varphi}_\infty =1 \right\}.
	\end{equation*}
	For \(\varphi \in \mathcal{R}(E)\), assume that \(y= \mathcal{K}\varphi\) and so \(y \in \mathcal{R}(E)\). For \(r \geq 1\), let us consider \( y \in C^{2r+1}[0,1]\) so that \(\mathcal{K}y \in C^{2r+1}[0,1]\) and therefore using \eqref{Eq: 0010}, we have
	\begin{equation*}
		\norm{(\mathcal{K} - P_n \mathcal{K}) \mathcal{K} \varphi }_\infty = \norm{(I - P_n ) \mathcal{K} y }_\infty \leq C_3 \norm{\left(\mathcal{K}y\right)^{(2r+1)}}_\infty h^{2r+1}.
	\end{equation*}
	From inequality \eqref{Eq: 005}, it follows that  
	\begin{align*}
		\norm{\left(\mathcal{K}y\right)^{(2r+1)}}_\infty &\leq C_1 \left(\norm{y}_\infty + \norm{y'}_\infty + \ldots + \norm{y^{(2r-1)}}_\infty\right) \\  
		&= C_1 \left(\norm{\mathcal{K}\varphi}_\infty + \norm{(\mathcal{K}\varphi)'}_\infty + \ldots + \norm{(\mathcal{K}\varphi)^{(2r-1)}}_\infty\right).
	\end{align*}  
	By utilizing inequalities \eqref{Eq: 004} and \eqref{Eq: 005}, we conclude that the  quantity \( \norm{\left(\mathcal{K}y\right)^{(2r+1)}}_\infty \) is independent of \( h \). Therefore, we obtain 
	\begin{equation*}
		\norm{(\mathcal{K} - P_n \mathcal{K}) \mathcal{K} \varphi }_\infty = O\left(h^{2r+1}\right),
	\end{equation*}
	which leads to
	\begin{equation*}
		\norm{(\mathcal{K} - P_n \mathcal{K}) \mathcal{K} |_{\mathcal{R}(E)} } = 	O\left(h^{2r+1}\right).
	\end{equation*}
	When \(r = 0\), it is easy to see that 
	\begin{equation*}
		\norm{(\mathcal{K} - P_n \mathcal{K}) \mathcal{K} |_{\mathcal{R}(E)} } = O\left(h \right).
	\end{equation*}
	Now, consider the second bound of the lemma as
	\begin{equation*}
		\norm{ \mathcal{K} (\mathcal{K} - P_n \mathcal{K}) \mathcal{K} |_{\mathcal{R}(E)} } = \sup_{\varphi \in \mathcal{R}(E)}\left\{ \norm{\mathcal{K}(\mathcal{K} - P_n \mathcal{K}) \mathcal{K} \varphi }_\infty : \norm{\varphi}_\infty =1 \right\}.
	\end{equation*}
	Assume that \(\varphi \in C^{2r+1}[0,1]\), which implies \(y \in C^{2r+1}[0,1]\) and so \(\mathcal{K}y \in C^{2r+1}[0,1]\). Using Proposition \ref{pro1} for \(r \geq 1\), we have
	\begin{equation*}
		\norm{\mathcal{K}(\mathcal{K} - P_n \mathcal{K}) \mathcal{K} \varphi }_\infty = \norm{\mathcal{K}(I - P_n ) \mathcal{K} y }_\infty \leq C_5 \norm{(\mathcal{K} y)^{(2r+1)}}_{ \infty} h^{2r + 3}.
	\end{equation*}
	As discussed earlier, the quantity \( \norm{(\mathcal{K} y)^{(2r+1)}}_{\infty} \) is independent of \( h \) and therefore
	\begin{equation*}
		\norm{\mathcal{K}(\mathcal{K} - P_n \mathcal{K}) \mathcal{K} \varphi }_\infty =  O\left(h^{2r + 3}\right),
	\end{equation*}
	which implies that
	\begin{equation*}
		\norm{ \mathcal{K} (\mathcal{K} - P_n \mathcal{K}) \mathcal{K} |_{\mathcal{R}(E)} } =  O\left(h^{2r + 3}\right).
	\end{equation*}
	When \(r=0\), using the Proposition \ref{pro1}, we have 
	\begin{equation*}
		\norm{\mathcal{K}(I - P_n ) \mathcal{K} y }_\infty \leq C_6 \norm{\mathcal{K} y}_{ \infty} h^{2}.
	\end{equation*}
	From estimate \eqref{Eq: 004}, we see that
	\begin{equation*}
		\norm{\mathcal{K} y}_{ \infty} \leq C_1 \norm{y}_\infty = C_1 \norm{\mathcal{K}\varphi}_\infty \leq(C_1)^2  \norm{\varphi}_\infty. 
	\end{equation*} 
	Therefore, we get
	\begin{equation*}
		\norm{ \mathcal{K} (\mathcal{K} - P_n \mathcal{K}) \mathcal{K} |_{\mathcal{R}(E)} } = O\left(h^{2}\right),
	\end{equation*}
	which completes the proof.
	
\end{proof}
\begin{lemma} \label{lem2}
	Let $ \mathcal{K}$ be the linear integral operator defined by \eqref{Eq: 002} with a Green's function-type kernel. Let $P_n$ be the orthogonal projection onto the approximating space $\mathcal{X}_n$ defined by \eqref{Eq: 007}. Then
	\begin{eqnarray*}
		\norm{(\mathcal{K} - \mathcal{K}_n^M) \mathcal{K} |_{\mathcal{R}(E)}} =
		\begin{cases}
			O\left(h^{2r+3} \right) & \text{for } r \geq 1, \\
			O\left(h^{3} \right) & \text{for } r = 0,
		\end{cases}
	\end{eqnarray*}
	\begin{eqnarray*}
		\norm{\mathcal{K} (\mathcal{K} - \mathcal{K}_n^M) \mathcal{K} |_{\mathcal{R}(E)}} =
		\begin{cases}
			O\left(h^{2r+5} \right) & \text{for } r \geq 1, \\
			O\left(h^{4} \right) & \text{for } r = 0,
		\end{cases}
	\end{eqnarray*}
	where $\mathcal{K}_n^M= P_n \mathcal{K} + \mathcal{K} P_n - P_n \mathcal{K} P_n.$
\end{lemma}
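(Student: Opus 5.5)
The key is the factorization
\[
\mathcal{K} - \mathcal{K}_n^M = \mathcal{K} - P_n\mathcal{K} - \mathcal{K}P_n + P_n\mathcal{K}P_n = (I-P_n)\mathcal{K}(I-P_n),
\]
so that
\[
(\mathcal{K} - \mathcal{K}_n^M)\mathcal{K} = (I-P_n)\,\mathcal{K}(I-P_n)\mathcal{K}, \qquad
\mathcal{K}(\mathcal{K} - \mathcal{K}_n^M)\mathcal{K} = \mathcal{K}(I-P_n)\,\mathcal{K}(I-P_n)\mathcal{K}.
\]
For \(\varphi \in \mathcal{R}(E)\) with \(\norm{\varphi}_\infty = 1\), set \(y = \mathcal{K}\varphi\). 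Since \(\lambda\) is simple, \(\varphi = \lambda^{-1}\mathcal{K}\varphi\). Applying \eqref{Eq: 004}, \eqref{Eq: 005} and \eqref{Eq: 006} repeatedly (bootstrapping regularity through \(\varphi = \lambda^{-1}\mathcal{K}\varphi\)) shows that \(y \in C^{2r+1}[0,1]\) with \(\norm{y^{(2r+1)}}_\infty\) bounded independently of \(h\).

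Proposition \ref{pro1} then gives the \emph{inner} estimate
\[
\norm{\mathcal{K}(I-P_n)y}_\infty \leq C_5\norm{y^{(2r+1)}}_\infty h^{2r+3} \quad (r\geq 1).
\]
For the first claim it remains to handle the outer factor \((I-P_n)\). The crude bound \(\norm{I-P_n} \leq 1+C_2\) from \eqref{Eq: 008} already yields \(O(h^{2r+3})\) for \(r\geq 1\), which is exactly what is claimed, so no gain from the outer factor is needed. For \(r=0\), Proposition \ref{pro1} applied with the smoothness available (for \(r=0\), \(\norm{(I-P_n)y}_\infty = O(h)\) by \eqref{Eq: 0010}) gives an inner bound of order \(h^2\), and I then need one more power of \(h\) from the outer \((I-P_n)\). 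To get it, I write \(z = \mathcal{K}(I-P_n)y\) and use \eqref{Eq: 0010} with \(\beta = 1\): \(\norm{(I-P_n)z}_\infty \leq C_3\norm{z'}_\infty h\). By the Leibniz formula for \((\mathcal{K}x)'\), \(z' = \int \partial_s\kappa(s,t)\,(I-P_n)y(t)\,dt\), and the kernel \(\partial_s\kappa\) is piecewise smooth with a jump across the diagonal. The argument of Proposition \ref{pro1} (orthogonality away from the diagonal cell, plus a bound of order \(h\cdot h\cdot\norm{(I-P_n)y}\) on the diagonal cell) still gives \(\norm{z'}_\infty = O(h^2)\), hence an \(O(h^3)\) total.

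For the second claim I treat \(\mathcal{K}(I-P_n)\) as acting on \(w = \mathcal{K}(I-P_n)y\), whose norm is \(O(h^{2r+3})\) but which is not smooth in the \(h\)-uniform sense. I therefore use the second bound of Proposition \ref{pro1}, \(\norm{\mathcal{K}(I-P_n)w}_\infty \leq C_6\norm{w}_\infty h^2\), which holds for general continuous \(w\) and which I would rederive for general \(r\) (the derivation only used \(\kappa_s - \gamma_i = O(h)\) on the diagonal cell and \((I-P_{n,j})\kappa_s = O(h)\) elsewhere). This gives \(O(h^{2r+3}\cdot h^2) = O(h^{2r+5})\) for \(r\ge1\), and \(O(h^2\cdot h^2) = O(h^4)\) for \(r=0\).

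The main obstacle is the \(r=0\) case of the first estimate: extracting the extra power of \(h\) from the outer \((I-P_n)\). This requires controlling the derivative of \(\mathcal{K}(I-P_n)y\) despite the jump in \(\partial_s\kappa\) on the diagonal; I will handle it by repeating the diagonal-cell splitting of Proposition \ref{pro1} with \(\kappa\) replaced by \(\partial_s\kappa\).
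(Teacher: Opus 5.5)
Your factorization $\mathcal{K}-\mathcal{K}_n^M=(I-P_n)\mathcal{K}(I-P_n)$ follows the paper's proof, and so do the first estimate and the second estimate for $r\ge 1$; those parts are sound. For the first estimate you use the crude bound on the outer $I-P_n$ when $r\ge 1$, and $\norm{(I-P_n)z}_\infty\le C_3\norm{z'}_\infty h$ with $\norm{z'}_\infty=O(h^2)$ when $r=0$. One small correction: on the diagonal cell of the $z'$ estimate you only get $\norm{\partial_s\kappa}_\infty\,h\,\norm{(I-P_n)y}_\infty=O(h^2)$, with no extra factor $h$, because $\partial_s\kappa$ jumps at $t=s$. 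That still suffices.

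The gap is the second estimate for $r=0$. There you use $\norm{\mathcal{K}(I-P_n)w}_\infty\le C_6\norm{w}_\infty h^2$ for arbitrary continuous $w$, i.e.\ $\norm{\mathcal{K}(I-P_n)}=O(h^2)$. This is false for piecewise constants. Because $P_n$ is self-adjoint, $\norm{\mathcal{K}(I-P_n)}=\sup_s\norm{(I-P_n)\kappa_s}_{L^1}$. With the ingredients you cite, each of the $n-1$ off-diagonal cells contributes $O(h)\cdot h$, so the total is $O(h)$, not $O(h^2)$.

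For $r=0$ this $O(h)$ is sharp. Take the test kernel of the numerical section at $s=1/2$: $\kappa_{1/2}(t)=\tfrac12\min\{t,1-t\}$ is linear with slope $\pm\tfrac12$ on every cell when $n$ is even. Hence $\norm{(I-P_n)\kappa_{1/2}}_{L^1}=n\cdot h^2/8=h/8$, and your argument yields only $O(h^2)\cdot O(h)=O(h^3)$ when $r=0$. For $r\ge 1$ the $O(h^2)$ operator bound is true, but the reason is that $(I-P_{n,j})\kappa_s=O(h^{2r+1})$ off the diagonal, not merely $O(h)$. The paper's proof takes exactly this step via the second bound of Proposition~\ref{pro1}, whose $r=0$ proof drops the factor $n-1$ in the off-diagonal sum. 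So the defect is shared.

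The repair is cheap. Since $(I-P_n)^2=I-P_n$,
\[
\mathcal{K}(I-P_n)\mathcal{K}(I-P_n)\mathcal{K}\varphi=\mathcal{K}(I-P_n)\bigl[(I-P_n)\mathcal{K}(I-P_n)\mathcal{K}\varphi\bigr],
\]
so the second quantity is at most $\norm{\mathcal{K}(I-P_n)}$ times the first. For $r=0$ we have $\norm{\mathcal{K}(I-P_n)}\le C_1 h$, since every cell contributes at most $C_1h\cdot h$. Combined with your $O(h^3)$ first estimate this gives $O(h^4)$. For $r\ge 1$ the same device gives $O(h^2)\cdot O(h^{2r+3})=O(h^{2r+5})$.

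Alternatively, keep your route but use the smoothness of $w=\mathcal{K}(I-P_n)y$. You already showed $\norm{w'}_\infty=O(h^2)$. The argument of Proposition~\ref{pro1} with $r=0$ gives $\norm{\mathcal{K}(I-P_n)x}_\infty\le Ch^2\norm{x'}_\infty$ for $x\in C^1[0,1]$, hence again $O(h^4)$.
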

\begin{proof}
	For any \(\varphi \in \mathcal{R}(E)\) we have  \( \mathcal{K}\varphi \in \mathcal{R}(E)\). Then 
	\begin{align*}
		\norm{(\mathcal{K} - \mathcal{K}_n^M) \mathcal{K} |_{\mathcal{R}(E)}} &= \sup_{\varphi \in \mathcal{R}(E)}\left\{ 	\norm{(\mathcal{K} - \mathcal{K}_n^M) \mathcal{K}\varphi}_\infty : \norm{\varphi}_\infty =1 \right\} \\
		&= \sup_{\varphi \in \mathcal{R}(E)}\left\{ \norm{(I - P_n) \mathcal{K}(I - P_n)\mathcal{K}\varphi}_\infty : \norm{\varphi}_\infty =1 \right\}.
	\end{align*}
	For \( r \geq 1 \), we write  
	\begin{equation*}
		\norm{(I - P_n) \mathcal{K}(I - P_n)\mathcal{K}\varphi}_\infty \leq\norm{I - P_n} \norm{\mathcal{K}(I - P_n)\mathcal{K}\varphi}_\infty.
	\end{equation*}
	From \eqref{Eq: 008}, it follows that \( \norm{I - P_n} \leq 1 +C_2 \). Let \( \varphi \in C^{2r+1}[0,1] \), which implies \( \mathcal{K} \varphi \in C^{2r+1}[0,1] \). Moreover, applying Proposition \ref{pro1}, we have
	\begin{equation*}
		\norm{(I - P_n) \mathcal{K}(I - P_n)\mathcal{K}\varphi}_\infty \leq (1 +C_2)C_5 \norm{(\mathcal{K}\varphi)^{(2r+1)}}_\infty h^{2r+3}.
	\end{equation*}
	Following inequality \eqref{Eq: 005}, we observe that the bound \(\norm{ \left(\mathcal{K}\varphi\right)^{(2r+1)}}_\infty \) does not depend on \(h\). Therefore, 
	\begin{equation} \label{Eq: 0030}
		\norm{(I - P_n) \mathcal{K}(I - P_n)\mathcal{K}\varphi}_\infty = O\left(h^{2r+3}\right).
	\end{equation}
	In the case of \(r=0\), we use \eqref{Eq: 0010} to obtain
	\begin{equation} \label{Eq: 0031}
		\norm{(I - P_n) \mathcal{K}(I - P_n)\mathcal{K}\varphi}_\infty \leq C_3 \norm{\left( \mathcal{K}(I - P_n)y\right)'}_\infty h.
	\end{equation}
	Now, define 
	\begin{equation*}
		\ell_s(t) = \ell(s,t) = \begin{cases}
			D^{(1,0)} \kappa_1(s, t) , & s,t \in \Omega_1, \\
			D^{(1,0)} \kappa_2(s, t) , & s,t \in \Omega_2.
		\end{cases}
	\end{equation*}
	Consider 
	\begin{align*} 
		(\mathcal{K}(I - P_n)y)'(s) &= \int_{0}^{1} \ell_s(t) (I - P_n) y(t) \, dt \notag \\
		&= \sum_{j=1}^{n} \int_{t_{j-1}}^{t_j} \ell_s(t) (I - P_{n,j}) y(t) \, dt \notag \\
		&= \sum_{j=1}^{n} \langle (I - P_{n,j}) \ell_s, (I - P_{n,j}) y \rangle_{\Delta_j}.
	\end{align*}
	Let \( s \in [t_{i-1}, t_i]\), for \(i=1,2,\ldots,n\). Then, we can write
	\begin{equation} \label{Eq: 0032}
		(\mathcal{K}(I - P_n)y)'(s) = \sum_{\substack{j=1 \\  j\ne i} }^{n} \langle (I - P_{n,j}) \ell_s, (I - P_{n,j}) y \rangle_{\Delta_j} + \langle (I - P_{n,i}) \ell_s, (I - P_{n,i}) y \rangle_{\Delta_i}.
	\end{equation}
	For \( j \neq i \), we observe that both \( \ell_s \in C^{2r+1}(\Delta_j) \) and \( x \in C^{2r+1}(\Delta_j)\). Hence, utilizing the estimate \eqref{Eq: 009}, we obtain  
	\begin{equation} \label{Eq: 0033}  
		\bigg|\sum_{\substack{j=1 \\ j\ne i} }^{n} \langle (I - P_{n,j}) \ell_s, (I - P_{n,j}) y \rangle_{\Delta_j} \bigg| \leq (C_3)^2 \norm{\ell_s}_\infty \norm{y}_\infty (n-1) h^{3} \leq  (C_3 C_1)^2 \norm{\varphi}_\infty (n-1) h^{3},
	\end{equation}  
	where \(\norm{\ell_s}_\infty = C_1\) by definition. Also, whenever \(j = i\), \(\ell_s\) is continuous on \( [t_{i-1}, t_i]\), for \(i=1,2,\ldots,n\). Thus, following the proof similar to Proposition \ref{pro1}, we get
	\begin{equation*}
		|\langle (I - P_{n,i}) \ell_s, (I - P_{n,i}) y \rangle_{\Delta_i}| = O\left(h^{2} \right).
	\end{equation*}
	Applying the above estimate along with inequality \eqref{Eq: 0033} in \eqref{Eq: 0032}, we have
	\begin{equation*}
		\norm{	(\mathcal{K}(I - P_n)y)'}_\infty = O\left(h^{2} \right)
	\end{equation*}
	Using above estimate in \eqref{Eq: 0031}, we obtain
	\begin{equation*}
		\norm{(I - P_n) \mathcal{K}(I - P_n)\mathcal{K}\varphi}_\infty = O\left(h^{3} \right).
	\end{equation*}
	The above estimate and \eqref{Eq: 0030} establish the first bound of the lemma.\\
	Now, consider the other bound of lemma as
	\begin{equation*}
		\norm{\mathcal{K}(\mathcal{K} - \mathcal{K}_n^M) \mathcal{K} |_{\mathcal{R}(E)}} 
		= \sup_{\varphi \in \mathcal{R}(E)}\left\{\norm{ \mathcal{K} (I - P_n) \mathcal{K}(I - P_n)\mathcal{K}\varphi}_\infty : \norm{\varphi}_\infty =1 \right\}.
	\end{equation*}
	We can write it as follows
	\begin{equation*}
		\norm{\mathcal{K}(I - P_n) \mathcal{K}(I - P_n)\mathcal{K}\varphi}_\infty \leq\norm{\mathcal{K}(I - P_n)} \norm{\mathcal{K}(I - P_n)\mathcal{K}\varphi}_\infty.
	\end{equation*}
	For any \(x \in C[0,1]\), consider
	\begin{equation*}
		\norm{\mathcal{K}(I - P_n)} = \sup_{x \in C[0,1]}\left\{ \norm{\mathcal{K}(I - P_n)x}_\infty : \norm{x}_\infty =1 \right\}.
	\end{equation*}
	Using Proposition \ref{pro1}, we obtain
	\begin{equation*}
		\norm{\mathcal{K}(I - P_n)} = O\left(h^{2} \right).
	\end{equation*}
	For \(r \geq 1\), let \(\varphi \in C^{2r+1}[0,1]\), it follows that \(\mathcal{K}\varphi \in C^{2r+1}[0,1]\) and using Proposition \ref{pro1}, we have
	\begin{equation*}
		\norm{\mathcal{K}(I - P_n)\mathcal{K}\varphi}_\infty = O\left(h^{2r +3}\right).
	\end{equation*}
	Combining the above two estimates, we obtain 
	\begin{equation}  \label{Eq: 0034}
		\norm{\mathcal{K}(I - P_n) \mathcal{K}(I - P_n)\mathcal{K}\varphi}_\infty = 	O\left(h^{2r+5}\right).
	\end{equation}
	When \(r=0\), using Proposition \ref{pro1}, we obtain
	\begin{equation*}
		\norm{\mathcal{K}(I - P_n)\mathcal{K}\varphi}_\infty = O\left(h^{2}\right).
	\end{equation*}
	Therefore,
	\begin{equation*} 
		\norm{\mathcal{K}(I - P_n) \mathcal{K}(I - P_n)\mathcal{K}\varphi}_\infty = 	O\left(h^{4}\right).
	\end{equation*}
	The above estimate along with \eqref{Eq: 0034} establish the desired second bound of the lemma, thereby completing the proof.
\end{proof}
Combining the results \eqref{Eq: 0018}, \eqref{Eq: 0019}, \eqref{Eq: 0020} and Lemma \ref{lem1}, we obtain the following convergence rates for the approximation of eigenvalues and spectral subspaces with Galerkin and iterated Galerkin methods.
\begin{theorem} \label{tp1}
	Let $\lambda$ be a simple eigenvalue of $\mathcal{K}$, and let $\lambda_n^G$ denote the corresponding Galerkin eigenvalue of $P_n \mathcal{K} $. Let $\varphi_n \in \mathcal{R}(E_n)$ be a Galerkin eigenvector, and let $\varphi_n^S$ denote the associated iterated Galerkin eigenvector. 
	Then, for sufficiently large $n$, 
	\begin{eqnarray*} 
		\norm{\varphi_n - E\varphi_n  }_{\infty} = 
		\begin{cases}
			O\left(h^{2r+1} \right) & \text{for } r \geq 1, \\
			O\left(h \right) & \text{for } r = 0,
		\end{cases}
	\end{eqnarray*} 
	
	\begin{eqnarray*}
		\norm{\varphi_n^S - E\varphi_n^S  }_{\infty} =
		\begin{cases}
			O\left(h^{2r+3} \right) & \text{for } r \geq 1, \\
			O\left(h^{2} \right) & \text{for } r = 0,
		\end{cases}
	\end{eqnarray*}
	and 
	\begin{eqnarray*} 
		| \lambda - \lambda_n^G | =
		\begin{cases}
			O\left(h^{2r+3} \right) & \text{for } r \geq 1, \\
			O\left(h^{2} \right) & \text{for } r = 0.
		\end{cases}
	\end{eqnarray*}
\end{theorem}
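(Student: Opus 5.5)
The plan is to take $\pi_n = P_n$ in the general framework of Section 3.3 and combine the abstract bounds \eqref{Eq: 0018}, \eqref{Eq: 0019} and \eqref{Eq: 0020} with the operator estimates of Lemma \ref{lem1}. First I will check that the hypotheses of those bounds hold. The family $\{P_n\mathcal{K}\}$ must converge to $\mathcal{K}$ in the collectively compact sense. This follows because $\mathcal{K}$ maps bounded sets of $L^\infty[0,1]$ into equicontinuous bounded subsets of $C[0,1]$ (by \eqref{Eq: 004}), $\norm{P_n}\le C_2$ by \eqref{Eq: 008}, and $P_n x \to x$ for continuous $x$. Together these give $\norm{(I-P_n)\mathcal{K}}\to 0$, so norm convergence holds, and collective compactness follows. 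Consequently, for $n$ large the contour $\Gamma$ lies in $\rho(P_n\mathcal{K})$ and $E_n$ has rank $m=1$, since $\lambda$ is simple.

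For the eigenvector estimate I apply \eqref{Eq: 0018}:
\[
\norm{\varphi_n - E\varphi_n}_\infty \le C\norm{(\mathcal{K}-P_n\mathcal{K})\mathcal{K}|_{\mathcal{R}(E)}},
\]
and the first bound of Lemma \ref{lem1} immediately gives $O(h^{2r+1})$ for $r\ge1$ and $O(h)$ for $r=0$. For the iterated eigenvector, \eqref{Eq: 0019} bounds $\norm{\varphi_n^S - E\varphi_n^S}_\infty$ by $C\norm{\mathcal{K}(\mathcal{K}-P_n\mathcal{K})\mathcal{K}|_{\mathcal{R}(E)}}$. The second bound of Lemma \ref{lem1} then yields $O(h^{2r+3})$ and $O(h^2)$. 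One normalization point needs attention. In \eqref{Eq: 0014} the iterate carries a factor $1/\lambda_n$, whereas \eqref{Eq: 0019} is stated for $\mathcal{K}\varphi_n$. Since $\lambda_n\to\lambda\neq 0$, the factor $1/\lambda_n$ is uniformly bounded for large $n$, and $I-E$ is linear, so the rate is unaffected.

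For the eigenvalue I use \eqref{Eq: 0020}. Because $m=1$, the arithmetic mean $\hat\lambda_n$ coincides with the single Galerkin eigenvalue $\lambda_n^G$ inside $\Gamma$, so
\[
|\lambda-\lambda_n^G|\le C\norm{\mathcal{K}(\mathcal{K}-P_n\mathcal{K})\mathcal{K}|_{\mathcal{R}(E)}}.
\]
The second bound of Lemma \ref{lem1} again gives $O(h^{2r+3})$ for $r\ge1$ and $O(h^2)$ for $r=0$.

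The main obstacle is not the assembly itself but checking that the smoothness assumptions used inside Lemma \ref{lem1} are legitimate on $\mathcal{R}(E)$. That lemma needs $\mathcal{K}\varphi$ and $\mathcal{K}^2\varphi$ to lie in $C^{2r+1}[0,1]$ with $h$-independent bounds for $\varphi\in\mathcal{R}(E)$, which requires enough regularity of $\kappa_i$ (namely $\alpha\ge 2r+2$, consistent with the definition of $C_1$). I would handle this by a bootstrap. Since $\lambda$ is simple, $\varphi=\lambda^{-1}\mathcal{K}\varphi$. Repeated use of \eqref{Eq: 004}, \eqref{Eq: 005} and \eqref{Eq: 006} then shows $\varphi\in C^{2r+2}[0,1]$ with all derivatives bounded by a multiple of $\norm{\varphi}_\infty$. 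Because $\mathcal{R}(E)$ is one-dimensional, the supremum over unit vectors is attained and finite, and the constants in Lemma \ref{lem1} are uniform.
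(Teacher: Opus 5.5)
Your proposal is correct and follows the paper's route: with $\pi_n = P_n$, you substitute the two bounds of Lemma \ref{lem1} into \eqref{Eq: 0018}, \eqref{Eq: 0019} and \eqref{Eq: 0020}, which is all the paper does. Your additional checks are details the paper leaves implicit, and you handle them correctly: the norm convergence of $P_n\mathcal{K}$ to $\mathcal{K}$, $m=1$ so that $\hat{\lambda}_n=\lambda_n^G$, the harmless factor $1/\lambda_n$ in \eqref{Eq: 0014}, and the regularity of $\mathcal{R}(E)$ needed in Lemma \ref{lem1}, which requires kernel smoothness of order about $2r+2$ rather than just $\alpha>r$.
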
	

In a similar manner, \eqref{Eq: 0021}, \eqref{Eq: 0022}, \eqref{Eq: 0023} and Lemma \ref{lem2}, provide the following convergence rates for the modified and iterated modified Galerkin methods.
\begin{theorem} \label{tc2}
	Let $\lambda$ be a simple eigenvalue of $\mathcal{K}$, and let $\lambda_n^M$ denote the corresponding modified Galerkin eigenvalue of $\mathcal{K}_n^M $. 
	Let $\varphi_n^M \in \mathcal{R}(E_n^M)$ be a modified Galerkin eigenvector, and let $\widetilde{\varphi}_n^M$ denote the associated iterated modified Galerkin eigenvector. 
	Then, for sufficiently large $n$, 
	\begin{eqnarray*} 
		\norm{\varphi_n^M - E\varphi_n^M  }_{\infty} =
		\begin{cases}
			O\left(h^{2r+3} \right) & \text{for } r \geq 1, \\
			O\left(h^{3} \right) & \text{for } r = 0,
		\end{cases}
	\end{eqnarray*}
	
	\begin{eqnarray*} 
		\norm{\widetilde{\varphi}_n^M - E\widetilde{\varphi}_n^M  }_{\infty} = 
		\begin{cases}
			O\left(h^{2r+5} \right) & \text{for } r \geq 1, \\
			O\left(h^{4} \right) & \text{for } r = 0,
		\end{cases}
	\end{eqnarray*}
	and 
	\begin{eqnarray*} 
		| \lambda - \lambda_n^M | = 
		\begin{cases}
			O\left(h^{2r+5} \right) & \text{for } r \geq 1, \\
			O\left(h^{4} \right) & \text{for } r = 0.
		\end{cases}
	\end{eqnarray*}
\end{theorem}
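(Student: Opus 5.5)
Theorem \ref{tc2} follows by combining the abstract estimates \eqref{Eq: 0021}, \eqref{Eq: 0022}, \eqref{Eq: 0023} with Lemma \ref{lem2}, in the same way Theorem \ref{tp1} was read off from Lemma \ref{lem1}. I will take $\pi_n = P_n$ and $\mathcal{K}_n = \mathcal{K}_n^M = P_n\mathcal{K} + \mathcal{K}P_n - P_n\mathcal{K}P_n$.

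The first step is to check the hypotheses under which \eqref{Eq: 0021}--\eqref{Eq: 0023} were stated. We need $\mathcal{K}_n^M \to \mathcal{K}$ in the collectively compact sense, or at least in a sense for which Theorems \ref{th1} and \ref{th2} apply. Write
\[
\mathcal{K} - \mathcal{K}_n^M = (I-P_n)\mathcal{K}(I-P_n).
\]
By Proposition \ref{pro1}, $\norm{\mathcal{K}(I-P_n)} = O(h^2)$. Together with $\norm{I-P_n} \le 1 + C_2$ from \eqref{Eq: 008}, this gives $\norm{\mathcal{K} - \mathcal{K}_n^M} \to 0$. Norm convergence is stronger than collectively compact convergence. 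Hence, for $n$ large, $\Gamma \subset \rho(\mathcal{K}_n^M)$, the resolvents are uniformly bounded on $\Gamma$, and $E_n^M$ has rank $m = 1$.

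The second step is to specialize to a simple eigenvalue. Since $m = 1$, the arithmetic mean $\hat{\lambda}_n^M$ is exactly the single eigenvalue $\lambda_n^M$ of $\mathcal{K}_n^M$ inside $\Gamma$. Therefore \eqref{Eq: 0023} gives
\[
|\lambda - \lambda_n^M| \le C \norm{\mathcal{K}(\mathcal{K} - \mathcal{K}_n^M)\mathcal{K}|_{\mathcal{R}(E)}}.
\]
By the second estimate of Lemma \ref{lem2}, this is $O(h^{2r+5})$ for $r \ge 1$ and $O(h^4)$ for $r = 0$.

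For the eigenvectors, \eqref{Eq: 0021} bounds $\norm{\varphi_n^M - E\varphi_n^M}_\infty$ by $C\norm{(\mathcal{K} - \mathcal{K}_n^M)\mathcal{K}|_{\mathcal{R}(E)}}$. The first estimate of Lemma \ref{lem2} makes this $O(h^{2r+3})$ for $r \ge 1$ and $O(h^3)$ for $r = 0$. Similarly, \eqref{Eq: 0022} combined with the second estimate of Lemma \ref{lem2} gives $O(h^{2r+5})$ and $O(h^4)$ for $\widetilde{\varphi}_n^M$.

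The main obstacle is a normalization mismatch. In \eqref{Eq: 0022} the iterate is defined as $\mathcal{K}\varphi_n^M$, whereas \eqref{Eq: 0017} divides by $\lambda_n^M$. I would handle this through linearity of $E$:
\[
\widetilde{\varphi}_n^M - E\widetilde{\varphi}_n^M = \frac{1}{\lambda_n^M}\bigl(\mathcal{K}\varphi_n^M - E\mathcal{K}\varphi_n^M\bigr).
\]
Since $\lambda_n^M \to \lambda \neq 0$, the factor $1/|\lambda_n^M|$ is bounded uniformly for large $n$, so the order is unchanged. The normalization $\norm{\varphi_n^M}_\infty = 1$ keeps all constants independent of $h$. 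This completes the argument.
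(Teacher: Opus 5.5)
Your proposal is correct and follows the paper's route: the paper obtains this theorem in one line by inserting the two estimates of Lemma~\ref{lem2} into \eqref{Eq: 0021}--\eqref{Eq: 0023}. Your extra checks (norm convergence via $\mathcal{K}-\mathcal{K}_n^M=(I-P_n)\mathcal{K}(I-P_n)$, $\hat{\lambda}_n^M=\lambda_n^M$ since $m=1$, and the bounded factor $1/\lambda_n^M$ in the iterate) just make explicit what the paper leaves implicit.

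One caution on the convergence step. That step needs only $\norm{\mathcal{K}(I-P_n)}\to 0$, which is fortunate: for $r=0$ this norm equals $\sup_s\norm{(I-P_n)\kappa_s}_{L^1}$ and is of exact order $h$, not $h^2$ (about $h/8$ for the test kernel of the numerical section). So the $O(h^2)$ bound from Proposition~\ref{pro1} should only be quoted for $r\geq 1$. The $O(h^4)$ rate of Lemma~\ref{lem2} for $r=0$, which you use, still holds, e.g.\ via $\norm{\mathcal{K}(I-P_n)}\,\norm{(I-P_n)\mathcal{K}(I-P_n)\mathcal{K}\varphi}_\infty=O(h)\cdot O(h^3)$.
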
	

\subsection{Interpolatory projection based approximations}

First, we establish some important results which will be useful in obtaining the approximate eigenelements using the interpolatory projection. From Rakshit et al. \cite[Proposition 1]{GR-SKS-ASR}, we have the following result:
\begin{proposition} \label{pro2}
	For $r \geq 0$, if $x \in C^{2r+2}[0,1]$, then
	\begin{eqnarray*}
		\norm{\mathcal{K}(I-Q_n)x }_\infty \leq2C_1 \norm{x}_{2r+2, \infty} h^{2r + 2}.
	\end{eqnarray*}
\end{proposition}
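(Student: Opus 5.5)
Writing $\kappa_s(t)=\kappa(s,t)$, I would split the integral subinterval by subinterval:
\[
\mathcal{K}(I-Q_n)x(s)=\sum_{j=1}^n\int_{t_{j-1}}^{t_j}\kappa_s(t)\,\Psi_j(t)\,[\tau_j^0,\ldots,\tau_j^{2r},t]x\,dt .
\]
The key structural fact is that the $2r+1$ equidistant nodes are symmetric about the midpoint of $\Delta_j$. Since $\Psi_j$ then has odd degree $2r+1$ and is odd about the midpoint, $\int_{\Delta_j}\Psi_j(t)\,dt=0$. This is the mechanism that gains one extra power of $h$ beyond the pointwise bound $O(h^{2r+1})$, and it replaces the Legendre orthogonality used in Proposition \ref{pro1}.

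On each interval I would set $g_{s,j}(t)=\kappa_s(t)\,[\tau_j^0,\ldots,\tau_j^{2r},t]x$ and subtract its value at the midpoint $m_j$. Using the vanishing moment,
\[
\int_{\Delta_j}\kappa_s\Psi_j[\cdots]x\,dt=\int_{\Delta_j}\Psi_j(t)\bigl(g_{s,j}(t)-g_{s,j}(m_j)\bigr)dt .
\]
Several bounds then apply. We have $|\Psi_j|\le h^{2r+1}$. For $x\in C^{2r+2}$, the divided difference $[\tau_j^0,\ldots,\tau_j^{2r},t]x$ is $C^1$ in $t$, with value bounded by $\|x^{(2r+1)}\|_\infty/(2r+1)!$ and derivative bounded by $\|x^{(2r+2)}\|_\infty/(2r+2)!$. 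Finally, $\kappa_s$ is Lipschitz in $t$ on each piece with constant $C_1$, and it stays Lipschitz even across $t=s$, because $\kappa$ is continuous on the diagonal and piecewise $C^1$. Hence $|g_{s,j}(t)-g_{s,j}(m_j)|\le C\,C_1\|x\|_{2r+2,\infty}\,h$ uniformly in $j$, including the interval containing $s$. Each interval then contributes $O(h^{2r+1}\cdot h\cdot h)$, and summing over the $n=1/h$ intervals gives $O(h^{2r+2})$ with a constant of the form $C_1\|x\|_{2r+2,\infty}$. The last step is to track the constants (the factorials and the interval length) so that they collapse to $2C_1$; I expect this bookkeeping to be harmless.

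The main obstacle I anticipate is the interval $\Delta_i$ containing $s$, where $\kappa_s$ has a kink. The point to check is that only Lipschitz continuity, and not $C^1$, is needed for the subtract-the-midpoint argument. This holds because the mean value argument can be applied separately on $[t_{i-1},s]$ and $[s,t_i]$ and chained through $\kappa_1(s,s)=\kappa_2(s,s)$, exactly as in Case I of Proposition \ref{pro1}. A secondary check is that for $r=0$ the single node is the midpoint, so $\Psi_j(t)=t-m_j$ is still odd and the same argument gives $O(h^2)$. Since the proposition is quoted from \cite[Proposition 1]{GR-SKS-ASR}, I would in the end cite that source and use the argument above only to verify that the hypotheses there match ours.
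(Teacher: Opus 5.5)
Your proof is correct. The paper does not prove this proposition at all; it imports it from Rakshit et al.\ \cite{GR-SKS-ASR}. Your argument therefore gives a self-contained verification where the paper offers only a citation, and your closing plan to cite that source matches what the paper does.

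The key step is sound. The $2r+1$ equidistant nodes (the single midpoint when $r=0$) are symmetric about $m_j$, so $\Psi_j$ is odd about $m_j$ and $\int_{\Delta_j}\Psi_j\,dt=0$. Subtracting $g_{s,j}(m_j)$ then gains the extra power of $h$.

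The two points you flag also hold:
\begin{itemize}
\item For $x\in C^{2r+2}[0,1]$, the map $t\mapsto[\tau_j^0,\ldots,\tau_j^{2r},t]x$ is $C^1$. Its derivative is $[\tau_j^0,\ldots,\tau_j^{2r},t,t]x$, which is bounded by $\norm{x^{(2r+2)}}_\infty/(2r+2)!$.
\item $\kappa_s$ is Lipschitz with constant $C_1$ on all of $[0,1]$, by chaining the mean value theorem through $\kappa_1(s,s)=\kappa_2(s,s)$. So the interval containing $s$ needs no separate treatment.
\end{itemize}

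The constant bookkeeping is indeed harmless. Use $|\Psi_j|\le h^{2r+1}$, $|t-m_j|\le h$, and bound both factorials by $1$. This gives $|g_{s,j}(t)-g_{s,j}(m_j)|\le 2C_1\norm{x}_{2r+2,\infty}h$. Summing $n=1/h$ intervals of length $h$ yields exactly $2C_1\norm{x}_{2r+2,\infty}h^{2r+2}$. Using $|t-m_j|\le h/2$ would even give $C_1$.
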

\begin{lemma} \label{lem3}
	Let $ \mathcal{K}$ be the linear integral operator defined by \eqref{Eq: 002} with a Green's function-type kernel. Let $Q_n$ be the interpolatory operator onto the approximating space $\mathcal{X}_n$ defined by \eqref{Eq: 0011}. Then for $r \geq 0$,
	\begin{eqnarray*} 
		\norm{(\mathcal{K} - Q_n \mathcal{K}) \mathcal{K} |_{\mathcal{R}(E)} } = O\left(h^{2r +1}\right), \quad 	\norm{ \mathcal{K} (\mathcal{K} - Q_n \mathcal{K}) \mathcal{K} |_{\mathcal{R}(E)} } = O\left(h^{2r +2}\right).
	\end{eqnarray*}
\end{lemma}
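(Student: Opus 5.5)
We follow the structure of the proof of Lemma \ref{lem1}, replacing the orthogonal-projection estimates by their interpolatory counterparts \eqref{Eq: 0012} and Proposition \ref{pro2}. Throughout, we fix $\varphi \in \mathcal{R}(E)$ with $\norm{\varphi}_\infty = 1$ and set $y = \mathcal{K}\varphi$. We then have
\[
(\mathcal{K} - Q_n\mathcal{K})\mathcal{K}\varphi = (I-Q_n)\mathcal{K}y, \qquad \mathcal{K}(\mathcal{K} - Q_n\mathcal{K})\mathcal{K}\varphi = \mathcal{K}(I-Q_n)\mathcal{K}y.
\]
Since $\mathcal{R}(E)$ is finite dimensional and invariant under $\mathcal{K}$, any bound obtained for a single normalized $\varphi$ transfers to the restricted operator norm with the same constant.

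\emph{First estimate.} We first establish the needed smoothness. From $\varphi = \lambda^{-1}\mathcal{K}\varphi$ and the Leibniz-rule computations leading to \eqref{Eq: 004}--\eqref{Eq: 006}, a bootstrap argument gives $\varphi \in C^{2r+2}[0,1]$ (using $\alpha > r$ and the kernel regularity behind $C_1$). Moreover, every derivative $\norm{\varphi^{(k)}}_\infty$ with $k \le 2r+2$ is bounded by a constant depending only on $C_1$ and $|\lambda|^{-1}$. The same therefore holds for $y = \lambda\varphi$ and for $\mathcal{K}y = \lambda^2\varphi$. Applying \eqref{Eq: 0012} with $\beta = 2r+1$ gives
\[
\norm{(I-Q_n)\mathcal{K}y}_\infty \le C_4\,\norm{(\mathcal{K}y)^{(2r+1)}}_\infty\, h^{2r+1}.
\]
By \eqref{Eq: 005}, the factor $\norm{(\mathcal{K}y)^{(2r+1)}}_\infty$ is bounded by $C_1\sum_{i\le 2r-1}\norm{y^{(i)}}_\infty$, which is independent of $h$. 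This yields $O(h^{2r+1})$. For $r = 0$ the same argument goes through using only \eqref{Eq: 004}, since $\beta = 1$.

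\emph{Second estimate.} We apply Proposition \ref{pro2} with $x = \mathcal{K}y$:
\[
\norm{\mathcal{K}(I-Q_n)\mathcal{K}y}_\infty \le 2C_1\,\norm{\mathcal{K}y}_{2r+2,\infty}\, h^{2r+2}.
\]
It remains to bound $\norm{\mathcal{K}y}_{2r+2,\infty}$ independently of $h$. Derivatives of order at most $2r+1$ are controlled by \eqref{Eq: 004} and \eqref{Eq: 005}. The top derivative is controlled by \eqref{Eq: 006}: $\norm{(\mathcal{K}y)^{(2r+2)}}_\infty \le C_1\sum_{i\le 2r}\norm{y^{(i)}}_\infty$. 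Each term $\norm{y^{(i)}}_\infty = \norm{(\mathcal{K}\varphi)^{(i)}}_\infty$ is in turn bounded by \eqref{Eq: 004}--\eqref{Eq: 005}, or by the eigen-relation, in terms of derivatives of $\varphi$ of lower order. Hence the constant is finite and independent of $h$, and the bound $O(h^{2r+2})$ follows. This argument covers all $r \ge 0$ at once, since Proposition \ref{pro2} holds for $r \ge 0$.

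\emph{Main obstacle.} The key step is justifying that $\mathcal{K}y \in C^{2r+2}[0,1]$ with $h$-independent bounds on all its derivatives. This requires the kernel smoothness to reach order $2r+2$ on each $\Omega_i$ (implicit in the definition of $C_1$), together with the bootstrap regularity of eigenfunctions from $\varphi = \lambda^{-1}\mathcal{K}\varphi$. We handle this by induction on the derivative order: each differentiation of $\mathcal{K}x$ produces diagonal jump terms involving one fewer derivative of $x$, as displayed in the formulas for $(\mathcal{K}x)^{(3)}$ and $(\mathcal{K}x)^{(4)}$. Two applications of $\mathcal{K}$ therefore raise the smoothness enough to reach order $2r+2$. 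Note that, unlike the Galerkin case, no extra $h^2$ gain is available from the near-diagonal subinterval, because equidistant nodes give no orthogonality. This is why the iterated rate is only $h^{2r+2}$.
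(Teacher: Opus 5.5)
Your proposal is correct and follows the paper's proof essentially step for step. With $y=\mathcal{K}\varphi$, the first bound comes from \eqref{Eq: 0012} combined with \eqref{Eq: 005}, and the second from Proposition \ref{pro2} applied to $x=\mathcal{K}y$ together with an $h$-independent bound on $\norm{\mathcal{K}y}_{2r+2,\infty}$. The differences are minor additions on your side. You justify the needed regularity of $\varphi$ by bootstrapping from $\varphi=\lambda^{-1}\mathcal{K}\varphi$, whereas the paper simply assumes $\varphi\in C^{2r+1}$ (resp.\ $C^{2r+2}$). You also invoke \eqref{Eq: 006} for the top derivative, where the paper cites only \eqref{Eq: 004}--\eqref{Eq: 005}.
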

\begin{proof}
	The space \(\mathcal{R}(E)\) is invariant by \(\mathcal{K}\), for any \(\varphi \in \mathcal{R}(E)\), we assume \(y= \mathcal{K}\varphi\) and so \(y \in \mathcal{R}(E)\). Consider 
	\begin{equation*}
		\norm{(\mathcal{K} - Q_n \mathcal{K}) \mathcal{K} |_{\mathcal{R}(E)} } = \sup_{\varphi \in \mathcal{R}(E)}\left\{ \norm{(\mathcal{K} - Q_n \mathcal{K}) \mathcal{K} \varphi }_\infty : \norm{\varphi}_\infty =1 \right\}.
	\end{equation*}
	Further, 
	\begin{equation*}
		\norm{(\mathcal{K} - Q_n \mathcal{K}) \mathcal{K} \varphi }_\infty = \norm{(I - Q_n) \mathcal{K} y }_\infty.
	\end{equation*}
	Let \(\varphi \in C^{2r+1}[0,1]\), which implies \(y \in C^{2r+1}[0,1]\) and so \(\mathcal{K}y \in C^{2r+1}[0,1]\). Therefore, using the estimate \eqref{Eq: 0012}, we have
	\begin{equation*}
		\norm{(\mathcal{K} - Q_n \mathcal{K}) \mathcal{K} \varphi }_\infty \leq C_4 \norm{\left(\mathcal{K}y\right)^{(2r+1)}}_\infty h^{2r+1}.
	\end{equation*}
	From inequality \eqref{Eq: 005}, we get
	\begin{align*}
		\norm{\left(\mathcal{K}y\right)^{(2r+1)}}_\infty &\leq C_1 \left(\norm{y}_\infty + \norm{y'}_\infty + \ldots + \norm{y^{(2r-1)}}_\infty\right) \\
		&= C_1 \left(\norm{\mathcal{K}\varphi}_\infty + \norm{(\mathcal{K}\varphi)'}_\infty + \ldots + \norm{(\mathcal{K}\varphi)^{(2r-1)}}_\infty\right).
	\end{align*}
	Following inequalities \eqref{Eq: 004} and \eqref{Eq: 005}, we see that the bound \(	\norm{\left(\mathcal{K}y\right)^{(2r+1)}}_\infty\) does not depend on \(h\). Therefore,
	\begin{equation*}
		\norm{(\mathcal{K} - Q_n \mathcal{K}) \mathcal{K} \varphi }_\infty = O\left(h^{2r+1}\right).
	\end{equation*}
	Hence, 
	\begin{equation*}
		\norm{(\mathcal{K} - Q_n \mathcal{K}) \mathcal{K} |_{\mathcal{R}(E)} } = O\left(h^{2r +1}\right).
	\end{equation*}
	Now, consider the second bound as
	\begin{equation*}
		\norm{ \mathcal{K} (\mathcal{K} - Q_n \mathcal{K}) \mathcal{K} |_{\mathcal{R}(E)} } = \sup_{\varphi \in \mathcal{R}(E)}\left\{ \norm{\mathcal{K}(\mathcal{K} - Q_n \mathcal{K}) \mathcal{K} \varphi }_\infty : \norm{\varphi}_\infty =1 \right\}.
	\end{equation*}
	Assume that \(\varphi \in C^{2r+2}[0,1]\), which implies \(y \in C^{2r+2}[0,1]\) and so \(\mathcal{K}y \in C^{2r+2}[0,1]\). Using Proposition \ref{pro2}, we have
	\begin{equation*}
		\norm{\mathcal{K}(\mathcal{K} - Q_n \mathcal{K}) \mathcal{K} \varphi }_\infty = \norm{\mathcal{K}(I - Q_n ) \mathcal{K} y }_\infty \leq2C_1 \norm{\mathcal{K}y}_{2r+2, \infty} h^{2r+2}.
	\end{equation*}
	Note that 
	\begin{equation*}
		\norm{\mathcal{K}y}_{2r+2, \infty} = \max_{0 \leq i \leq2r +2} \norm{ \left(\mathcal{K}y\right)^{(i)}}_\infty.
	\end{equation*}
	Following inequalities \eqref{Eq: 004} and \eqref{Eq: 005} as discussed above,  we see that the bound \(\norm{ \left(\mathcal{K}y\right)^{(i)}}_\infty \) is independent of \(h\). Therefore, we finally obtain
	\begin{equation*}
		\norm{ \mathcal{K} (\mathcal{K} - Q_n \mathcal{K}) \mathcal{K} |_{\mathcal{R}(E)} } = O\left(h^{2r +2}\right),
	\end{equation*}
	which completes the proof.
\end{proof}
\begin{lemma} \label{lem4}
	Let $ \mathcal{K}$ be the linear integral operator defined by \eqref{Eq: 002} with a Green's function-type kernel. Let $Q_n$ be the interpolatory operator onto the approximating space $\mathcal{X}_n$ defined by \eqref{Eq: 0011}. Then
	\begin{eqnarray*}
		\norm{(\mathcal{K} - \mathcal{K}_n^M) \mathcal{K} |_{\mathcal{R}(E)}} = 
		\begin{cases}
			O\left(h^{2r+2}\right) & \text{for } r \geq 1, \\
			O\left(h^{3}\right) & \text{for } r = 0,
		\end{cases}
	\end{eqnarray*}
	and
	\begin{eqnarray*}
		\norm{\mathcal{K} (\mathcal{K} - \mathcal{K}_n^M) \mathcal{K} |_{\mathcal{R}(E)}} =
		\begin{cases}
			O\left(h^{2r+3}\right) & \text{for } r \geq 1, \\
			O\left(h^{4} \right) & \text{for } r = 0,
		\end{cases}
	\end{eqnarray*}
	where $\mathcal{K}_n^M= Q_n \mathcal{K} + \mathcal{K} Q_n - Q_n \mathcal{K} Q_n.$
\end{lemma}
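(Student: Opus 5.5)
We will follow the pattern of the proof of Lemma \ref{lem2}. The starting point is the identity
\[
\mathcal{K} - \mathcal{K}_n^M = (I - Q_n)\mathcal{K}(I - Q_n),
\]
which is immediate from the definition of $\mathcal{K}_n^M= Q_n \mathcal{K} + \mathcal{K} Q_n - Q_n \mathcal{K} Q_n$. It gives
\[
\norm{(\mathcal{K} - \mathcal{K}_n^M)\mathcal{K}|_{\mathcal{R}(E)}} = \sup_{\varphi \in \mathcal{R}(E),\, \norm{\varphi}_\infty = 1} \norm{(I-Q_n)\mathcal{K}(I-Q_n)\mathcal{K}\varphi}_\infty .
\]
We write $y = \mathcal{K}\varphi \in \mathcal{R}(E)$. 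Since $\mathcal{R}(E)$ is finite dimensional and invariant under $\mathcal{K}$, and $\varphi = \lambda^{-1}\mathcal{K}\varphi$, repeated use of \eqref{Eq: 004}--\eqref{Eq: 006} shows that $y$ is as smooth as the kernel allows. We will assume, as the paper does, that $y \in C^{2r+2}[0,1]$, with $\norm{y}_{2r+2,\infty}$ bounded by a constant times $\norm{\varphi}_\infty$ and independent of $h$.

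\textbf{The case $r \ge 1$.} We split the operator as
\[
\norm{(I-Q_n)\mathcal{K}(I-Q_n)y}_\infty \le \norm{I-Q_n}\,\norm{\mathcal{K}(I-Q_n)y}_\infty .
\]
The factor $\norm{I-Q_n}$ is uniformly bounded, since $Q_n$ is piecewise Lagrange interpolation at fixed relative nodes and so its norm is the Lebesgue constant, independent of $n$. Proposition \ref{pro2} then gives $\norm{\mathcal{K}(I-Q_n)y}_\infty \le 2C_1\norm{y}_{2r+2,\infty}h^{2r+2}$. Together these yield the bound $O(h^{2r+2})$. For the second bound we factor as
\[
\norm{\mathcal{K}(I-Q_n)\mathcal{K}(I-Q_n)y}_\infty \le \norm{\mathcal{K}(I-Q_n)\big|_{\mathcal{R}(\mathcal{K})}}\;\norm{\mathcal{K}(I-Q_n)y}_\infty .
\]
We will show that $\mathcal{K}(I-Q_n)$ applied to $z = \mathcal{K}w$ with $w \in C[0,1]$ is $O(h)\norm{w}_\infty$. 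To see this, apply \eqref{Eq: 0012} with $\beta = 1$ and \eqref{Eq: 004}, which give $\norm{(I-Q_n)z}_\infty \le C_4 C_1 h\norm{w}_\infty$. The function $(I-Q_n)y$ is not smooth, but it is a piecewise difference and $\norm{(I-Q_n)y}_\infty = O(h^{2r+1})$, so $\mathcal{K}(I-Q_n)y \in \mathcal{R}(\mathcal{K})$ with $w = (I-Q_n)y$. Hence
\[
\norm{(I-Q_n)\mathcal{K}(I-Q_n)y}_\infty \le C_4C_1 h \cdot O(h^{2r+1}),
\]
and one more application of $\mathcal{K}$ does not lose anything. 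Combining this with the bound $\norm{\mathcal{K}(I-Q_n)y}_\infty = O(h^{2r+2})$ gives the claimed $O(h^{2r+3})$.

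\textbf{The case $r=0$.} This requires genuine gains from the midpoint rule, exactly as in Lemma \ref{lem2}. For the first bound we use \eqref{Eq: 0012} with $\beta=1$ on the function $\mathcal{K}(I-Q_n)y$:
\[
\norm{(I-Q_n)\mathcal{K}(I-Q_n)y}_\infty \le C_4 h \norm{(\mathcal{K}(I-Q_n)y)'}_\infty .
\]
Differentiating under the integral produces the kernel $\ell_s$ of Lemma \ref{lem2}, which is $D^{(1,0)}\kappa_i$ on the two triangles and is smooth in $t$ except for a jump at $t=s$. We must show that $\int_0^1 \ell_s(t)(I-Q_n)y(t)\,dt = O(h^2)$. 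On each subinterval not containing $s$, the midpoint interpolation error $y(t)-y(\tau_j) = y'(\tau_j)(t-\tau_j) + O(h^2)$ integrates against $\ell_s(\tau_j) + O(h)$ to give $O(h^3)$; summed over $n$ intervals this is $O(h^2)$. On the single interval containing $s$ the integrand is $O(1)\cdot O(h)$ over a length $h$, which is again $O(h^2)$. This yields $O(h^3)$.

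For the second bound with $r=0$, we use the same factorization as in the second bound for $r\ge1$: Proposition \ref{pro2} gives $\norm{\mathcal{K}(I-Q_n)y}_\infty = O(h^2)$. For the left factor we need $\norm{\mathcal{K}(I-Q_n)z}_\infty = O(h^2)\norm{w}_\infty$ for $z=\mathcal{K}w$, and this follows from Proposition \ref{pro2} applied to $z$, provided $z$ has two bounded derivatives, which \eqref{Eq: 004} supplies. The product is then $O(h^4)$.

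\textbf{Main obstacle.} The delicate point is justifying that the intermediate function $\mathcal{K}(I-Q_n)y$ (or $(I-Q_n)y$) is regular enough to feed into Proposition \ref{pro2} or \eqref{Eq: 0012}. The function $(I-Q_n)y$ is discontinuous at the nodes $t_j$, so Proposition \ref{pro2} cannot be applied directly to it. Our plan avoids this by always applying \eqref{Eq: 0012} or Proposition \ref{pro2} to elements of the form $\mathcal{K}w$, whose derivatives up to order two are controlled by $\norm{w}_\infty$ through \eqref{Eq: 004}. The other tool is the local midpoint cancellation argument for $r=0$. If the $r\ge1$ second bound via $\mathcal{K}w$ only gives $O(h)$ per factor, we will instead bound $\norm{\mathcal{K}(I-Q_n)}$ on $\mathcal{R}(\mathcal{K})$ by $O(h^2)$ via Proposition \ref{pro2}, which is stated for $r\ge0$ with $C^{2r+2}$ input. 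This requires the regularity of $\mathcal{K}w$, and if that regularity falls short we fall back on the $O(h)$ estimate, which still suffices for $h^{2r+3}$.
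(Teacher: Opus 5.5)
Your proof of the first bound, on $\norm{(\mathcal{K}-\mathcal{K}_n^M)\mathcal{K}|_{\mathcal{R}(E)}}$, is fine. For $r\ge1$ it is exactly the paper's argument. For $r=0$, your explicit midpoint-cancellation proof that $\norm{(\mathcal{K}(I-Q_n)y)'}_\infty=O(h^2)$ is a self-contained substitute for the paper's citation of Chatelin--Lebbar.

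The second bound has a genuine gap, both for $r\ge1$ and for $r=0$. In your factorization through $\norm{\mathcal{K}(I-Q_n)|_{\mathcal{R}(\mathcal{K})}}$, the first factor must be controlled relative to $\norm{z}_\infty$ with $z=\mathcal{K}w$. What you actually prove is $\norm{\mathcal{K}(I-Q_n)\mathcal{K}w}_\infty\le Ch\norm{w}_\infty$ for $r\ge1$ and $\le Ch^2\norm{w}_\infty$ for $r=0$. Both are relative to $\norm{w}_\infty$, which $\norm{\mathcal{K}w}_\infty$ does not control. In fact the restricted norm does not tend to zero. For the kernel of Section 5, take $z(t)=\sin(\pi t)\sin^2(2rn\pi t)$, or $\sin(\pi t)\cos^2(n\pi t)$ when $r=0$. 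It is smooth with $z(0)=z(1)=0$, so $z=\mathcal{K}(-z'')\in\mathcal{R}(\mathcal{K})$. It vanishes at every collocation node, so $Q_nz=0$. Yet $\mathcal{K}(I-Q_n)z=\mathcal{K}z\to\frac{1}{2\pi^2}\sin(\pi s)$ uniformly.

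Used correctly, with $w=(I-Q_n)y$ and $\norm{w}_\infty=O(h^{2r+1})$, your estimate gives only $O(h)\cdot O(h^{2r+1})=O(h^{2r+2})$ for $r\ge1$ and $O(h^2)\cdot O(h)=O(h^3)$ for $r=0$. Your final step of ``combining'' with $\norm{\mathcal{K}(I-Q_n)y}_\infty=O(h^{2r+2})$ counts the smallness of $(I-Q_n)y$ twice. For the same reason, your fallback remark that an $O(h)$ factor still suffices for $h^{2r+3}$ is false.

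For $r\ge1$, the missing ingredient is to use second rather than first derivatives, as the paper does. Group the operator as $[\mathcal{K}(I-Q_n)\mathcal{K}]\,(I-Q_n)y$. Piecewise interpolation of degree $2r\ge2$ reproduces linear functions, so $\norm{(I-Q_n)\mathcal{K}w}_\infty\le C\norm{(\mathcal{K}w)''}_\infty h^2\le C'h^2\norm{w}_\infty$ by \eqref{Eq: 004} with $j=2$. This holds for every bounded $w$, because $(\mathcal{K}w)'$ is Lipschitz. Hence $\norm{\mathcal{K}(I-Q_n)\mathcal{K}}=O(h^2)$, and multiplying by $\norm{(I-Q_n)y}_\infty=O(h^{2r+1})$ gives $O(h^{2r+3})$.

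For $r=0$, no operator-norm factorization of this kind reaches $h^4$, since midpoint interpolation is only first order in the sup norm. The paper imports this bound from Kulkarni--Nidhin. To prove it yourself you need a second layer of cancellation:
\begin{itemize}
\item Let $z=\mathcal{K}(I-Q_n)y$. Your own computation gives $\norm{z'}_\infty=O(h^2)$.
\item On each $\Delta_j$, $z''(u)=c(u)(I-Q_n)y(u)+O(h^2)=c(\tau_j)y'(\tau_j)(u-\tau_j)+O(h^2)$, with $c(u)=D^{(1,0)}\kappa_1(u,u)-D^{(1,0)}\kappa_2(u,u)$. The integral part of $z''$ is $O(h^2)$ by the same midpoint argument.
\item Hence $z(t)-z(\tau_j)$ is an odd function of $t-\tau_j$ of size $O(h^3)$, plus an $O(h^4)$ remainder.
\item Integrating against $\kappa_s(t)=\kappa_s(\tau_j)+O(h)$ gives $O(h^5)$ per subinterval, so $\norm{\mathcal{K}(I-Q_n)z}_\infty=O(h^4)$.
\end{itemize}
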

\begin{proof}
	For any \(\varphi \in \mathcal{R}(E)\),  \( \mathcal{K}\varphi \in \mathcal{R}(E)\). Consider 
	\begin{align*}
		\norm{(\mathcal{K} - \mathcal{K}_n^M) \mathcal{K} |_{\mathcal{R}(E)}} &= \sup_{\varphi \in \mathcal{R}(E)}\left\{ 	\norm{(\mathcal{K} - \mathcal{K}_n^M) \mathcal{K}\varphi}_\infty : \norm{\varphi}_\infty =1 \right\} \\
		&= \sup_{\varphi \in \mathcal{R}(E)}\left\{ \norm{(I - Q_n) \mathcal{K}(I - Q_n)\mathcal{K}\varphi}_\infty : \norm{\varphi}_\infty =1 \right\}.
	\end{align*}
	For \(r \geq 1\), we write
	\begin{equation*}
		\norm{(I - Q_n) \mathcal{K}(I - Q_n)\mathcal{K}\varphi}_\infty \leq\norm{I - Q_n} \norm{\mathcal{K}(I - Q_n)\mathcal{K}\varphi}_\infty.
	\end{equation*}
	Since \(Q_n\) is uniformly bounded, i.e. \(\norm{Q_n} = q < \infty\). For \(\varphi \in C^{2r+2}[0,1]\), it follows that \(\mathcal{K}\varphi \in C^{2r+2}[0,1]\) and using Proposition \ref{pro2}, we have
	\begin{equation*}
		\norm{(I - Q_n) \mathcal{K}(I - Q_n)\mathcal{K}\varphi}_\infty \leq2C_1 (1 + q) \norm{\mathcal{K}\varphi}_{2r+2, \infty} h^{2r+2}.
	\end{equation*}
	Here
	\begin{equation*}
		\norm{\mathcal{K}\varphi}_{2r+2, \infty} = \max_{0 \leq i \leq 2r +2} \norm{ \left(\mathcal{K}\varphi\right)^{(i)}}_\infty.
	\end{equation*}
	Following inequalities \eqref{Eq: 004} and \eqref{Eq: 005}, we observe that the bound \(\norm{ \left(\mathcal{K}\varphi\right)^{(i)}}_\infty \) is independent of \(h\), for \(i=0, 1, \cdots, 2r+2\). Therefore, 
	\begin{equation} \label{Eq: 0035}
		\norm{(I - Q_n) \mathcal{K}(I - Q_n)\mathcal{K}\varphi}_\infty = O\left(h^{2r+2}\right).
	\end{equation}
	When \(r=0\), using \eqref{Eq: 0012}, we have
	\begin{equation*}
		\norm{(I - Q_n) \mathcal{K}(I - Q_n)\mathcal{K}\varphi}_\infty \leq C_4 \norm{\left( \mathcal{K}(I - Q_n)\mathcal{K}\varphi\right)'}_\infty h.
	\end{equation*}
	It follows from Chatelin-Lebbar \cite[Theorem 15]{Cha-Leb2}
	\begin{equation*}
		\norm{\left( \mathcal{K}(I - Q_n)\mathcal{K}\varphi\right)'}_\infty = O\left(h^{2}\right)
	\end{equation*}
	Therefore, from above two estimates, we have
	\begin{equation*} 
		\norm{(I - Q_n) \mathcal{K}(I - Q_n)\mathcal{K}\varphi}_\infty = O\left(h^{3}\right).
	\end{equation*}
	The above bound with \eqref{Eq: 0035} gives the first estimate of the required results. Now, consider
	\begin{equation*}
		\norm{\mathcal{K}(\mathcal{K} - \mathcal{K}_n^M) \mathcal{K} |_{\mathcal{R}(E)}} 
		= \sup_{\varphi \in \mathcal{R}(E)}\left\{  \norm{\mathcal{K}(I - Q_n) \mathcal{K}(I - Q_n)\mathcal{K}\varphi}_\infty : \norm{\varphi}_\infty =1 \right\}.
	\end{equation*}
	For \(r \geq 1\), we write
	\begin{equation*}
		\norm{\mathcal{K}(I - Q_n) \mathcal{K}(I - Q_n)\mathcal{K}\varphi}_\infty \leq \norm{\mathcal{K}(I - Q_n)\mathcal{K}} \norm{(I - Q_n)\mathcal{K}\varphi}_\infty.
	\end{equation*}
	Let $\norm{x}_\infty \leq 1$, then
	\begin{align*}
		\norm{\mathcal{K}(I - Q_n)\mathcal{K}x}_\infty &\leq \norm{\mathcal{K}} \norm{(I - Q_n)\mathcal{K}x}_\infty \\
		&\leq \norm{\mathcal{K}} C_4 \norm{(\mathcal{K}x)''}_{\infty} h^2 \\
		&\leq C_1 C_4 \norm{\mathcal{K}} \norm{x}_{\infty} h^2.
	\end{align*}
	This gives,
	\begin{equation*}
		\norm{\mathcal{K}(I - Q_n)\mathcal{K}} = O\left(h^2\right).
	\end{equation*} 
	Using \eqref{Eq: 0012} and the above bound, we have
	\begin{equation*} 
		\norm{\mathcal{K}(I - Q_n) \mathcal{K}(I - Q_n)\mathcal{K}\varphi}_\infty \leq \norm{\mathcal{K}(I - Q_n)\mathcal{K}} 	\norm{(I - Q_n)\mathcal{K}\varphi}_\infty = 	O\left(h^{2r+3}\right).
	\end{equation*}
	When \(r=0\), by Kulkarni-Nidhin \cite[Proposition 3.7]{RPKTJN}, we obtain
	\begin{equation*} 
		\norm{\mathcal{K}(I - Q_n) \mathcal{K}(I - Q_n)\mathcal{K}\varphi}_\infty = 	O\left(h^{4}\right).
	\end{equation*}
	The above two estimates complete the proof.
\end{proof}
By combining the results \eqref{Eq: 0018}, \eqref{Eq: 0019}, \eqref{Eq: 0020} and Lemma \ref{lem3}, we derive the following orders of convergence for the approximation of eigenvalues and spectral subspaces using collocation and iterated collocation methods.
\begin{theorem} \label{tc1}
	Suppose $\lambda$ is a simple eigenvalue of $\mathcal{K}$, 
	and let $\lambda_n^C$ be the eigenvalue of $Q_n  \mathcal{K} $ converging to $\lambda$.  
	Let $\varphi_n \in \mathcal{R}(E_n)$ be a collocation eigenvector, and let $\varphi_n^S$ denote the corresponding iterated collocation eigenvector. Then, for all $n$ large enough
	\begin{eqnarray*} 
		\norm{\varphi_n - E\varphi_n  }_{\infty} = O\left(h^{2r +1}\right), \quad 	\norm{\varphi_n^S - E\varphi_n^S  }_{\infty} = O\left(h^{2r +2}\right),
	\end{eqnarray*}
	and 
	\begin{equation*} 
		\left| \lambda - \lambda_n^C \right| = O\left(h^{2r +2}\right).
	\end{equation*}
\end{theorem}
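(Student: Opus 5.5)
The plan is to argue exactly as the paper did for Theorem \ref{tp1}: specialize the abstract bounds \eqref{Eq: 0018}, \eqref{Eq: 0019} and \eqref{Eq: 0020} to $\pi_n = Q_n$, and then substitute the operator estimates of Lemma \ref{lem3}. First I will check that the general framework applies. The family $\{Q_n\mathcal{K}\}$ has to converge to $\mathcal{K}$ in the collectively compact sense. This follows because $Q_n$ is uniformly bounded, say $\norm{Q_n}\le q$ (as used in Lemma \ref{lem4}), and $Q_n x \to x$ for every $x \in C[0,1]$. Since $\mathcal{K}$ maps bounded sets into relatively compact subsets of $C[0,1]$ (indeed into equicontinuous, even $C^1$-bounded sets by \eqref{Eq: 004}), we get $\norm{(I-Q_n)\mathcal{K}} \to 0$, i.e. norm convergence. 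This is more than enough for Theorems \ref{th1} and \ref{th2}. Consequently, for $n$ large, $\Gamma\subset\rho(Q_n\mathcal{K})$, the spectral projection $E_n$ has rank $m=1$, and the arithmetic mean $\hat\lambda_n$ reduces to the single eigenvalue $\lambda_n^C$ of $Q_n\mathcal{K}$ inside $\Gamma$.

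The three estimates then follow one at a time. For the eigenvector, \eqref{Eq: 0018} gives $\norm{\varphi_n - E\varphi_n}_\infty \le C\norm{(\mathcal{K}-Q_n\mathcal{K})\mathcal{K}|_{\mathcal{R}(E)}}$, and the first estimate of Lemma \ref{lem3} gives $O(h^{2r+1})$. For the iterated eigenvector $\varphi_n^S$, \eqref{Eq: 0019} bounds the error by $C\norm{\mathcal{K}(\mathcal{K}-Q_n\mathcal{K})\mathcal{K}|_{\mathcal{R}(E)}}$, which is $O(h^{2r+2})$ by the second estimate of Lemma \ref{lem3}. For the eigenvalue, \eqref{Eq: 0020} with $\hat\lambda_n=\lambda_n^C$ gives $|\lambda-\lambda_n^C| \le C\norm{\mathcal{K}(\mathcal{K}-Q_n\mathcal{K})\mathcal{K}|_{\mathcal{R}(E)}} = O(h^{2r+2})$. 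One normalization point needs care. The iterate in \eqref{Eq: 0014} is $\frac{1}{\lambda_n}\mathcal{K}\varphi_n$, whereas \eqref{Eq: 0019} is stated for $\mathcal{K}\varphi_n$. Because $\lambda_n^C\to\lambda\ne 0$, the factor $1/\lambda_n^C$ is bounded for large $n$, and $E$ is linear, so the rate is unchanged.

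The main point to verify is that Lemma \ref{lem3} really applies, in particular the smoothness it quietly assumes: $\varphi\in\mathcal{R}(E)$ must be smooth enough that $\mathcal{K}y$, with $y=\mathcal{K}\varphi$, lies in $C^{2r+2}[0,1]$ with $h$-independent bounds. Since $\lambda$ is simple, $\mathcal{R}(E)$ is spanned by one eigenfunction $\varphi=\lambda^{-1}\mathcal{K}\varphi$. I would bootstrap regularity from this identity using \eqref{Eq: 004}, \eqref{Eq: 005} and \eqref{Eq: 006}: each application of $\mathcal{K}$ gains up to two derivatives, with constants involving only $C_1$ and $|\lambda|^{-1}$. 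This gives $\varphi\in C^{2r+2}[0,1]$, provided the kernel has $2r+2$ derivatives on each $\Omega_i$, as the definition of $C_1$ presumes. Once that is settled, the theorem is a direct combination of the abstract estimates with Lemma \ref{lem3}, and no further analysis is needed.
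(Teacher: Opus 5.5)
Your proposal is correct and matches the paper's argument. The paper obtains Theorem~\ref{tc1} in one line, by specializing \eqref{Eq: 0018}, \eqref{Eq: 0019}, \eqref{Eq: 0020} to $\pi_n = Q_n$ and inserting the two bounds of Lemma~\ref{lem3}. Your additional checks are sound, and the paper leaves them implicit: norm convergence $\norm{(I-Q_n)\mathcal{K}}\to 0$; $E_n$ of rank one, so that $\hat\lambda_n=\lambda_n^C$; the harmless factor $1/\lambda_n^C$; and the bootstrap $\varphi=\lambda^{-1}\mathcal{K}\varphi\in C^{2r+2}[0,1]$, which does require $\kappa_i$ of class $C^{2r+2}$ rather than only $\alpha>r$.
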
	

Similarly, \eqref{Eq: 0021}, \eqref{Eq: 0022}, \eqref{Eq: 0023} and Lemma \ref{lem4} give the following orders of convergence for the modified and iterated modified collocation methods.
\begin{theorem} \label{tc2}
	Suppose $\lambda$ is a simple eigenvalue of $\mathcal{K}$, 
	and let $\lambda_n^M$ be the eigenvalue of $\mathcal{K}_n^M$ converging to $\lambda$.  
	Let $\varphi_n^M \in \mathcal{R}(E_n^M)$ be a modified collocation eigenvector, and let $\widetilde{\varphi}_n^M$ denote the corresponding iterated modified collocation eigenvector. Then, for all $n$ large enough
	\begin{eqnarray*} 
		\norm{\varphi_n^M - E\varphi_n^M  }_{\infty} =
		\begin{cases}
			O\left(h^{2r+2} \right) & \text{for } r \geq 1, \\
			O\left(h^{3} \right) & \text{for } r = 0,
		\end{cases}
	\end{eqnarray*}
	
	\begin{eqnarray*} 
		\norm{\widetilde{\varphi}_n^M - E\widetilde{\varphi}_n^M  }_{\infty} = 
		\begin{cases}
			O\left(h^{2r+3} \right) & \text{for } r \geq 1, \\
			O\left( h^{4} \right) & \text{for } r = 0,
		\end{cases}
	\end{eqnarray*}
	and 
	\begin{eqnarray*} 
		| \lambda -  \lambda_n^M | = 
		\begin{cases}
			O\left(h^{2r+3} \right) & \text{for } r \geq 1, \\
			O\left( h^{4} \right) & \text{for } r = 0.
		\end{cases}
	\end{eqnarray*}
\end{theorem}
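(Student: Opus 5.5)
The statement follows by combining the abstract estimates \eqref{Eq: 0021}, \eqref{Eq: 0022}, \eqref{Eq: 0023} with the rates of Lemma \ref{lem4}, in the same way Theorem \ref{tc1} was obtained from Lemma \ref{lem3}. Here $\pi_n = Q_n$ and $\mathcal{K}_n^M = Q_n\mathcal{K} + \mathcal{K}Q_n - Q_n\mathcal{K}Q_n$.

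First I would check that the abstract framework applies. The family $\{\mathcal{K}_n^M\}$ must converge to $\mathcal{K}$ in a way that allows the Osborn-type results (Theorems \ref{th1} and \ref{th2}).
\begin{itemize}
\item Since $Q_n x \to x$ for every $x \in C[0,1]$ and $\norm{Q_n} \le q$, the Banach--Steinhaus theorem together with compactness of $\mathcal{K}$ gives $\norm{(I-Q_n)\mathcal{K}} \to 0$.
\item We have $\mathcal{K} - \mathcal{K}_n^M = (I-Q_n)\mathcal{K}(I-Q_n)$, so $\norm{\mathcal{K}-\mathcal{K}_n^M} \le (1+q)\norm{(I-Q_n)\mathcal{K}} \to 0$.
\end{itemize}
Norm convergence makes the hypotheses of the Osborn results hold. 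In particular, for $n$ large, $\Gamma \subset \rho(\mathcal{K}_n^M)$ and the resolvents are uniformly bounded on $\Gamma$.

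Because $\lambda$ is simple, $m=1$. Then $\hat{\lambda}_n^M$ is exactly the single eigenvalue $\lambda_n^M$ of $\mathcal{K}_n^M$ inside $\Gamma$, and $\mathcal{R}(E_n^M)$ is one-dimensional.

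The three estimates then follow directly.
\begin{itemize}
\item For the eigenvector, \eqref{Eq: 0021} gives $\norm{\varphi_n^M - E\varphi_n^M}_\infty \le C\norm{(\mathcal{K}-\mathcal{K}_n^M)\mathcal{K}|_{\mathcal{R}(E)}}$. By the first part of Lemma \ref{lem4} this is $O(h^{2r+2})$ for $r\ge1$ and $O(h^3)$ for $r=0$.
\item For the iterated eigenvector $\widetilde{\varphi}_n^M = \frac{1}{\lambda_n^M}\mathcal{K}\varphi_n^M$, the factor $1/\lambda_n^M$ is only a bounded scalar, since $\lambda_n^M \to \lambda \neq 0$. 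The quantity $\norm{\widetilde{\varphi}_n^M - E\widetilde{\varphi}_n^M}$ scales linearly, so \eqref{Eq: 0022} still applies up to a constant. The second part of Lemma \ref{lem4} then gives $O(h^{2r+3})$ for $r\ge1$ and $O(h^4)$ for $r=0$.
\item For the eigenvalue, \eqref{Eq: 0023} with $m=1$ gives $|\lambda - \lambda_n^M| \le C\norm{\mathcal{K}(\mathcal{K}-\mathcal{K}_n^M)\mathcal{K}|_{\mathcal{R}(E)}}$. This yields the same rates as the iterated eigenvector.
\end{itemize}

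The only point needing care is the normalization. The paper defines $\widetilde{\varphi}_n^M$ with the factor $1/\lambda_n^M$ (and normalizes $\norm{\varphi_n^M}_\infty=1$), whereas \eqref{Eq: 0022} is stated for $\mathcal{K}\varphi_n^M$. I would handle this by linearity of $E$, absorbing $|\lambda_n^M|^{-1} \le 2/|\lambda|$ (valid for large $n$) into the generic constant. The substantive difficulty, namely the smoothness bookkeeping for the Green's kernel and the $r=0$ cases that rely on Chatelin--Lebbar and Kulkarni--Nidhin, has already been absorbed into Lemma \ref{lem4}.
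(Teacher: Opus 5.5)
Your proposal is correct and follows the paper's route exactly: the paper obtains this theorem in one line by combining \eqref{Eq: 0021}--\eqref{Eq: 0023} with Lemma \ref{lem4}, just as you do. Your extra checks are sound and only make explicit what the paper leaves implicit: norm convergence of $\mathcal{K}_n^M$ via $\mathcal{K}-\mathcal{K}_n^M=(I-Q_n)\mathcal{K}(I-Q_n)$, the fact that $m=1$ gives $\hat{\lambda}_n^M=\lambda_n^M$, and absorbing the factor $1/\lambda_n^M$ through linearity of $E$.
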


The complexity, implementation, and efficiency of the linear systems arising from the modified projection method have already been discussed in Kulkarni~\cite{RPK5,RPK6}.  Since \( \mathcal{K}_n^{M} = \pi_n \mathcal{K} + \mathcal{K}\pi_n - \pi_n \mathcal{K}\pi_n,\) and $\pi_n$ maps $\mathcal{X}$ onto the finite-dimensional space $\mathcal{X}_n$, it follows that for any  $x \in \mathcal{X}$, the terms $\pi_n\mathcal{K}x$ and  $\pi_n\mathcal{K}\pi_n x$ belong to $\mathcal{X}_n$, while  $\mathcal{K}\pi_n x \in \mathcal{K}(\mathcal{X}_n)$.  Therefore, $ \mathcal{R}(\mathcal{K}_n^{M}) \subset  Y_n := \mathcal{X}_n + \mathcal{K}(\mathcal{X}_n).$ Since $\dim(\mathcal{X}_n) = n(2r+1)$ and 
$\dim\big(\mathcal{K}(\mathcal{X}_n)\big)  \le \dim(\mathcal{X}_n)$, we obtain $\dim(Y_n) \le 2n(2r+1).$ Therefore, the eigenvalue analysis of $\mathcal{K}_n^{M}$ can be restricted to the finite-dimensional space $Y_n$, and the associated eigenvalue problem reduces to a matrix eigenvalue problem of dimension at most $2n(2r+1)$.  In contrast, the standard projection (or Sloan) method based on $\pi_n \mathcal{K}$ gives a matrix eigenvalue problem of dimension $n(2r+1)$. Compared with this standard projection methods, the modified projection method leads to a quadratic eigenvalue problem, which, after linearization, results in a matrix problem of approximately twice the size of the standard projection methods, thereby increasing the computational cost moderately. However, this additional cost is compensated by the higher-order convergence achieved by the modified projection method, leading to improved accuracy for the same mesh size.


\section{Numerical Results}
To demonstrate the applicability of the theoretical results derived in the preceding section, we now present a numerical example.  We consider the compact integral operator $\mathcal{K}$ defined by
\[
(\mathcal{K}x)(s)=\int_{0}^{1}\kappa(s,t)\,x(t)\,dt,
\]
with the Green's function-type kernel $\kappa$ is given by
\[
\kappa(s,t)=
\begin{cases}
	t(1-s), & 0\le t\le s\le 1,\\[1mm]
	s(1-t), & 0\le s\le t\le 1.
\end{cases}
\]
The kernel $\kappa\in C([0,1]\times[0,1])$, and it is well known that all eigenvalues of $\mathcal{K}$ are simple. The largest eigenvalue is given explicitly by $\lambda=\dfrac{1}{\pi^{2}},$ with the corresponding eigenfunction $\varphi(s)=\sin(\pi s)$.

Let $\mathcal{X}_{n}$ denote the space of piecewise constant functions associated with the uniform partition of $[0,1]$ as
\[
\Delta^{(n)}=\{0=t_{0}<t_{1}<\cdots<t_{n}=1\}, 
\quad t_{j}=\frac{j}{n}, \quad j=0,1,\dots,n.
\]
Throughout the numerical experiments, the experimental order of convergence is computed using the standard formula
\[
\delta=\frac{\log(E_{2n}/E_{n})}{\log 2},
\]
where $E_{n}$ denotes the corresponding numerical error obtained using $n$ subintervals.

\vspace*{0.2cm}
\noindent 
\textbf{Orthogonal projection.} Let $P_n: L^2[0, 1] \mapsto \mathcal{X}_n$ be the orthogonal projection, and let $\lambda_{n}^{G}$ and $\lambda_{n}^{M}$ denote the eigenvalue approximations obtained by the Galerkin and modified Galerkin methods, respectively. The errors in the approximation of the largest eigenvalue, together with the experimentally observed convergence rates, are reported in Table~\ref{table:031}. The numerical results clearly indicate second-order convergence for the standard Galerkin method, while the modified Galerkin method achieves fourth-order convergence, in full agreement with the theoretical results. 

\noindent
The accuracy of the corresponding eigenvector approximations is examined next.  Table~\ref{table:03Gal2} presents the errors measured in the supremum norm,  where the error is computed in terms of the gap between the exact spectral  subspace and its numerical approximation.  The results demonstrate that the Galerkin method exhibits first-order convergence of eigenvectors. The iterated Galerkin and modified Galerkin schemes improve the rate to second and third order, respectively. In particular, the iterated modified Galerkin method achieves fourth-order convergence. These observations clearly illustrate the superconvergence effect produced by iteration and confirm the theoretical 
predictions established in the convergence analysis.

\begin{table}[h!]
	\footnotesize
	\centering
	\setlength{\tabcolsep}{8pt}
	\renewcommand{\arraystretch}{1.2}
	\caption{Error in the approximation of eigenvalues in variants of Galerkin methods}
	\label{table:031}
	\begin{tabular}{|c|c c|c c|}
		\hline
		$n$ & $|\lambda - \lambda_n^G|$ & $\hat\delta_{G}$ & $|\lambda - \lambda_n^M|$ & $\hat\delta_{MG}$  \\[3pt] 
		\hline
		2   & $1.80 \times 10^{-2}$ &            & $3.75 \times 10^{-3}$ &           \\[3pt] 
		4   & $5.04 \times 10^{-3}$ & $1.83$     & $2.95 \times 10^{-4}$ & $3.67$     \\[3pt] 
		8   & $1.29 \times 10^{-3}$ & $1.96$     & $1.96 \times 10^{-5}$ & $3.91$     \\[3pt] 
		16  & $3.25 \times 10^{-4}$ & $1.99$     & $1.25 \times 10^{-6}$ & $3.97$     \\[3pt] 
		32  & $8.13 \times 10^{-5}$ & $2.00$     & $8.06 \times 10^{-8}$ & $3.96$     \\[3pt] 
		64  & $2.03 \times 10^{-5}$ & $2.00$     & $4.91 \times 10^{-9}$ & $4.04$     \\[3pt] 
		\hline
	\end{tabular}
\end{table}


\begin{table}[h!]
	\footnotesize
	\centering
		\setlength{\tabcolsep}{4pt}
	\renewcommand{\arraystretch}{1.2}
	\caption{Error in the approximation of eigenvectors in variants of Galerkin methods}
	\label{table:03Gal2}
	\begin{tabular}{|c|cc|cc|cc|cc|}
		\hline
			$n$ & $\norm{\varphi_n^G - E \varphi_n^G }_\infty$ & $\delta_G$ 
		& $\norm{\varphi_n^S- E \varphi_n^S }_\infty$ & $\delta_{IG}$ 
		& $\norm{\varphi_n^M - E \varphi_n^M }_\infty$ & $\delta_{MG}$ 
		& $\norm{\widetilde{\varphi}_n^{M} - E \widetilde{\varphi}_n^{M}}_\infty$ & $\delta_{IMG}$ \\[3.5pt] 
		\hline
		2   & $6.23\times10^{-1}$ &        & $2.34\times10^{-2}$ &        & $1.11\times10^{-2}$ &        & $3.02\times10^{-4}$ &        \\[3.5pt]
		4   & $3.71\times10^{-1}$ & $0.75$ & $5.81\times10^{-3}$ & $2.01$ & $1.87\times10^{-3}$ & $2.56$ & $1.83\times10^{-5}$ & $4.04$ \\[3.5pt]
		8   & $1.94\times10^{-1}$ & $0.94$ & $1.45\times10^{-3}$ & $2.00$ & $2.50\times10^{-4}$ & $2.90$ & $9.57\times10^{-7}$ & $4.26$ \\[3.5pt]
		16  & $9.78\times10^{-2}$ & $0.98$ & $3.62\times10^{-4}$ & $2.00$ & $3.18\times10^{-5}$ & $2.98$ & $6.03\times10^{-8}$ & $3.99$ \\[3.5pt]
		32  & $4.90\times10^{-2}$ & $1.00$ & $9.06\times10^{-5}$ & $2.00$ & $3.99\times10^{-6}$ & $2.99$ & $3.78\times10^{-9}$ & $4.00$ \\[3.5pt]
		64  & $2.45\times10^{-2}$ & $1.00$ & $2.26\times10^{-5}$ & $2.00$ & $4.99\times10^{-7}$ & $3.00$ & $2.36\times10^{-10}$ & $4.00$ \\[3.5pt]
		\hline
	\end{tabular}
\end{table}

\noindent
\textbf{Interpolatory projection.} Let $ Q_n: C [0, 1] \mapsto \mathcal{X}_n$ denote the interpolatory projection defined by
$
(Q_{n}x)(\tau_{j})=x(\tau_{j}), \quad j=1,2,\dots,n,
$
where the collocation points $\tau_{j}$ are chosen as the midpoints
$
\tau_{j}=\dfrac{2j-1}{2n}, \quad j=1,2,\dots,n.
$
Let $\lambda_{n}^{C}$ and $\lambda_{n}^{M}$ denote the eigenvalue approximations obtained using the collocation and modified collocation methods, respectively. The corresponding eigenvalue errors and experimental convergence rates are reported in Table~\ref{table:032}. The numerical results confirm second-order convergence for the collocation method and fourth-order convergence for the modified collocation method.  Finally, Table~\ref{table:03Col1} reports the errors for eigenvector approximations obtained using collocation-based methods. The results demonstrate that the modified and iterated modified collocation methods significantly improve the convergence rate, as iterated modified collocation eigenvector achieves the highest fourth-order accuracy. Overall, the numerical experiments strongly support the theoretical analysis and highlight the superior performance of the modified projection-based methods.

\begin{table}[h!]
	\footnotesize
	\centering
	\setlength{\tabcolsep}{8pt}
	\renewcommand{\arraystretch}{1.2}
	\caption{Error in the approximation of eigenvalues in variants of collocation methods}
	\label{table:032}
	\begin{tabular}{|c|c c|c c|}
		\hline
		$n$ & $|\lambda - \lambda_n^C|$ & $\hat\delta_{C}$ & $|\lambda - \lambda_n^M|$ & $\hat\delta_{MC}$ \\[3pt] 
		\hline
		4   & $2.44 \times 10^{-3}$ &            & $6.60 \times 10^{-4}$ &           \\[3pt] 
		8   & $6.41 \times 10^{-4}$ & $1.93$     & $4.07 \times 10^{-5}$ & $4.02$     \\[3pt] 
		16  & $1.62 \times 10^{-4}$ & $1.98$     & $2.54 \times 10^{-6}$ & $4.00$     \\[3pt] 
		32  & $4.07 \times 10^{-5}$ & $2.00$     & $1.59 \times 10^{-7}$ & $4.00$     \\[3pt] 
		64  & $1.02 \times 10^{-5}$ & $2.00$     & $9.93 \times 10^{-9}$ & $4.00$     \\[3pt] 
		128 & $2.54 \times 10^{-6}$ & $2.00$     & $6.21 \times 10^{-10}$ & $4.00$     \\[3pt] 
		\hline
	\end{tabular}
\end{table}

\begin{table}[h!]
	\footnotesize
	\centering
		\setlength{\tabcolsep}{4pt}
	\renewcommand{\arraystretch}{1.2}
	\caption{Error in the approximation of eigenvectors in variants of Collocation methods}
	\label{table:03Col1}
	\begin{tabular}{|c|cc|cc|cc|cc|}
		\hline
	$n$ & $\norm{\varphi_n^C - E \varphi_n^C }_\infty$ & $\delta_C$ 
	& $\norm{\varphi_n^S - E \varphi_n^S }_\infty$ & $\delta_{IC}$ 
	& $\norm{\varphi_n^M - E \varphi_n^M }_\infty$ & $\delta_{MC}$ 
	& $\norm{\widetilde{\varphi}_n^M - E \widetilde{\varphi}_n^M }_\infty$ & $\delta_{IMC}$ \\[3.5pt]
		\hline
		4   & $4.14\times10^{-1}$ &    & $7.59\times10^{-2}$ &  & $1.07\times10^{-3}$ &  & $9.31\times10^{-5}$ &  \\[3.5pt]
		8   & $1.99\times10^{-1}$ & 1.06 & $1.91\times10^{-2}$ & 1.99 & $1.37\times10^{-4}$ & $2.96$ & $5.70\times10^{-6}$ & $4.03$ \\[3.5pt]
		16  & $9.85\times10^{-2}$ & 1.01 & $4.81\times10^{-3}$ & 1.99 & $1.73\times10^{-5}$ & $2.99$ & $3.49\times10^{-7}$ & $4.03$ \\[3.5pt]
		32  & $4.91\times10^{-2}$ & 1.00 & $1.18\times10^{-3}$ & 2.02 & $2.16\times10^{-6}$ & $3.00$ & $1.54\times10^{-8}$ & $4.50$ \\[3.5pt]
		64  & $2.45\times10^{-2}$ & 1.00 & $2.97\times10^{-4}$ & 2.00 & $2.70\times10^{-7}$ & $3.00$ & $1.51\times10^{-9}$ & $3.36$ \\[3.5pt]
		128 & $1.23\times10^{-2}$ & 1.00 & $7.41\times10^{-5}$ & 2.00 & $3.37\times10^{-8}$ & $3.00$ & $9.43\times10^{-11}$ & $4.00$ \\[3.5pt]
		\hline
	\end{tabular}
\end{table}

\noindent

For the Green's function-type kernel under consideration, Allouch et al.~\cite{CA-PS-DS-MT} reported that a degenerate kernel method yields eigenvalue approximations  of order $O(h^{3})$. When midpoint collocation with piecewise constant function spaces is used, the corresponding eigenfunction approximation converges with order $O(h)$, which improves to $O(h^{3})$ after applying an iteration technique. In contrast, the modified midpoint collocation method proposed in the present work achieves a higher eigenvalue convergence rate of order $O(h^{4})$. The associated eigenfunction approximation converges with order $O(h^{3})$, and this rate further improves to $O(h^{4})$ under the iterated modified collocation scheme. Thus, for the same discretization setting, the proposed approach provides improved eigenvalue accuracy and enhanced spectral subspace approximation. These numerical observations are in full agreement with the theoretical error estimates established in this manuscript.
	
Moreover, unlike earlier works~\cite{RPK5,RPK6}, which deal with smooth kernels using orthogonal projection and Gauss point based interpolatory projections, the present study focuses on Green’s function-type kernels and adopts orthogonal projection and interpolatory projections based on equidistant collocation points (not necessarily Gauss points) over even degree piecewise polynomial spaces. Extending the superconvergence results of Rakshit et al.~\cite{GR-SKS-ASR} for linear Fredholm integral equations to the associated eigenvalue problems, we show that iteration preserves superconvergence of the eigenfunctions, while the modified projection method yields improved convergence rates for eigenvalue approximation, highlighting the efficiency and flexibility of the proposed framework.


\section{Conclusion}
In this work, we have investigated the convergence behavior of projection-based approximations for eigenvalue problems associated with compact integral operators having Green's kernels. Both classical and modified projection methods were analyzed in a unified framework. We established explicit convergence rates for the approximate eigenvalues and the corresponding spectral subspaces in the supremum-norm.

Our analysis shows that the modified (iterated) projection method produces higher-order convergence of eigenvalues compared to the classical projection method. In particular, the superconvergence phenomenon arising from iteration significantly improves the approximation of spectral subspaces. This enhancement persists even when the underlying Green's kernel lacks full smoothness along the diagonal, thereby extending the scope of earlier results that were primarily restricted to smooth kernels. The theoretical findings are supported by numerical experiments, which confirm that the iterated method achieves higher accuracy with a computational effort. Consequently, the modified projection approach provides an efficient and reliable framework for eigenvalue computations involving Green's kernels. Future research directions include the extension of the present analysis to integral operators with singular or weakly singular kernels, as well as tinvestigation of asymptotic expansions for eigenvalue approximations.


\section*{Declarations}
The authors have no conflict of interest to declare.

\subsection*{Funding Declaration}
The authors declare that no funds, or grants were received during the preparation of this manuscript.

\subsection*{Acknowledgement}
The author Akshay S. Rane acknowledges the UGC Faculty Recharge Program, India.


\end{document}